\newcommand{\field}[1]{\mathbb{#1}}
\newcommand{\A}{\field{A}}
\newcommand{\C}{\field{C}}
\newcommand{\G}{\field{G}}
\newcommand{\N}{\field{N}}
\newcommand{\PP}{\field{P}}
\newcommand{\V}{\field{V}}
\newcommand{\Z}{\field{Z}}
\newcommand{\krn}{{\rm ker}\,}
\theoremstyle{plain}
\newtheorem{theorem}{Theorem}[section]
\newtheorem{lemma}[theorem]{Lemma}
\newtheorem{corollary}[theorem]{Corollary}
\newtheorem{definition}[theorem]{Definition}
\newtheorem{remark}[theorem]{Remark}
\newtheorem{example}[theorem]{Example}
\newtheorem{question}[theorem]{Question}
\theoremstyle{definition}
\theoremstyle{remark}
\begin{document}

\makeatletter	   
\makeatother     

\title{Presentations, embeddings and automorphisms of homogeneous spaces for $SL_2(\C )$}
\author{G. Freudenburg}
\date{\today}
\subjclass[2020]{13A50, 14R10, 14R20}
\keywords{$SL_2$-action, reductive group action, automorphism group, homogeneous space} 

\begin{abstract} For an algebraically closed field $k$ of characteristic zero and a linear algebraic $k$-group $G$, it is well known that every affine $G$-variety admits a $G$-equivariant closed embedding into a finite-dimensional $G$-module.  
Such an embedding is a {\it presentation} of the $G$-variety, and a {\it minimal presentation} is one for which the dimension the $G$-module is minimal. 
The problem of finding a minimal presentation
generalizes the problem of determining whether a group action on affine space is linearizable. 
We give a minimal presentation for each homogeneous space for $SL_2(k)$. 
This constitutes the paper's main work. 
Of particular interest are the surfaces $Y=SL_2(k)/T$ and $X=SL_2(k)/N$ where $T$ is the one-dimensional torus and $N$ is its normalizer. 
We show that the minimal presentation of $X$ has dimension 5, the embedding dimension of $X$ is 4, and there does not exist a closed $SL_2$-equivariant embedding of $X$ in $\A_k^4$. Thus, the $SL_2$-action on $X$ is {\it absolutely nonextendable} to $\A_k^4$. We give two other examples of surfaces with absolutely nonextendable group actions. 
In addition, $X$ is noncancelative, that is, there exists a surface $\tilde{X}$ such that $X\times\A_k^1\cong_k\tilde{X}\times\A_k^!$ and $X\not\cong_k\tilde{X}$. 
Finally, we settle the long-standing open question of whether there exist inequivalent closed embeddings of $Y$ in $\A_k^3$ by constructing inequivalent embeddings. 
\end{abstract}

\maketitle

\section{Introduction}
Let $k$ be an algebraically closed field of characteristic zero, and let
the triple $(V,G,\rho )$ consist of an irreducible affine $k$-variety $V$ equipped with an algebraic action $\rho :G\to {\rm Aut}_k(V)$ of a linear algebraic $k$-group $G$.
Then $V$ admits
a $G$-equivariant closed embedding into a finite-dimensional $G$-module; see \cite{Borel.91}, Proposition 1.12 and \cite{Kraft.84}, II.2.5.
Blanc, Furter and Poloni give a more precise result for smooth rational affine curves over $k=\C$, showing that every such curve $\Gamma$ 
admits a $G$-equivariant closed embedding into a $G$-module of dimension 3, where $G={\rm Aut}_k(\Gamma)$  \cite{Blanc.Furter.Poloni.16}.
Each curve $\Gamma$ also admits a closed planar embedding $f:\Gamma\to\A_k^2$. If $G$ is finite and $f$ is $G$-equivariant,
then $G\subset GL_2(k)$, since every action of a finite group on the plane is linearizable. 
The finite subgroups of $GL_2(k)$ are well-known; see \cite{Riemenschneider.77}. 
The authors identify curves $\Gamma$ where $G$ is finite, but $G$ is not a subgroup of $GL_2(k)$, meaning that there is no $G$-equivariant closed embedding of $\Gamma$ in $\A^2_k$. 
For example, there exist $\Gamma$ whose automorphism group is the alternating group $A_4$. 
Such considerations motivate the following definitions.
\begin{definition} 
A {\bf presentation} of $(V,G,\rho )$ is an equivariant surjection $\varphi : k[W]\to \mathcal{O}(V)$ for some $G$-module $W$. 
$\varphi$ is a {\bf minimal} presentation if and only if $W$ is of minimal dimension.
\end{definition} 

Here, $k[W]$ is the symmetric algebra of $W$, and $\mathcal{O}(V)$ is the coordinate ring of $V$.
\begin{definition}
$(V,G,\rho )$ is {\bf absolutely nonextendable} if there is no $G$-equivariant closed algebraic embedding of $V$ into $\A_k^n$, 
where $n$ is the embedding dimension of $V$. 
\end{definition}

For presentations, the condition that $G$ is an algebraic group is necessary. Derksen, Kutzschebauch and Winkelmann showed the following.
\begin{theorem} {\rm (\cite{Derksen.Kutzschebauch.Winkelmann.99}, Theorem 2)} 
For all $n\in\N$, there does not exist any effective differentiable,
or holomorphic, or algebraic action of the group 
\[
{\rm Aut}_{\C}(\C^*\times\C^*)\cong \left( O_2(\C)\times O_2(\C)\right)\rtimes SL_2(\Z)
\]
on $\A_{\C}^n$. 
\end{theorem}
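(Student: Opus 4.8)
The plan is to exploit the internal structure of $G={\rm Aut}_{\C}(\C^*\times\C^*)$. The coordinate ring of $\C^*\times\C^*$ is $\C[x^{\pm1},y^{\pm1}]$, whose units are $\C^*\cdot x^{\Z}y^{\Z}$, so an automorphism is a torus translation composed with a monomial substitution, giving $G\cong(\C^*)^2\rtimes GL_2(\Z)$ with $(\C^*)^2$ acting by translations and $GL_2(\Z)$ through the standard action on the character lattice $\Z^2$. (This is the same group written $(O_2(\C)\times O_2(\C))\rtimes SL_2(\Z)$ in the statement, but only the presentation above will be used.) Any algebraic or holomorphic action of $G$ on $\A_{\C}^n=\C^n$ is in particular a smooth action of $G$ (with its natural Lie group topology) on $\mathbb{R}^{2n}$, and restricting it to the compact subtorus $K=(S^1)^2\subset(\C^*)^2\subset G$ yields a smooth $K$-action; so it suffices to derive a contradiction from an effective smooth $\rho\colon G\to {\rm Diff}(\mathbb{R}^{2n})$. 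The two structural facts that drive everything: $K$ is \emph{normal} in $G$ (it is centralized by $(\C^*)^2$ and is stable under the monomial automorphisms), and conjugation by the subgroup $GL_2(\Z)\subset G$ acts on $\widehat K\cong\Z^2$ as the standard representation up to an automorphism of $GL_2(\Z)$ — so every orbit of $GL_2(\Z)$ on $\Z^2\setminus\{0\}$ is infinite.

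First I would study the fixed locus $Z:=(\mathbb{R}^{2n})^{K}$. Effectiveness forces $K$ to act nontrivially, hence $Z\ne\mathbb{R}^{2n}$; and since $\mathbb{R}^{2n}$ is contractible and finite-dimensional, Smith theory for torus actions (in Borel's form) shows $Z$ is nonempty and $\Q$-acyclic, in particular connected. Being the fixed set of a smooth compact-group action, $Z$ is a closed smooth submanifold, necessarily of dimension $<2n$. Since $K\trianglelefteq G$, the submanifold $Z$ is $G$-invariant, so $G$ — and in particular the subgroup $GL_2(\Z)$ — acts on $Z$, while $K$ acts trivially there.

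The contradiction then comes from the normal bundle $\nu$ of $Z$ in $\mathbb{R}^{2n}$, a $K$-equivariant real vector bundle of rank $2n-\dim Z\ge 1$. Linearizing the $K$-action near a point of $Z$ (Bochner's theorem) identifies the tangent space to $Z$ with the $K$-fixed subspace of the ambient tangent space, so $\nu$ has no trivial $K$-summand and $\nu\otimes_{\mathbb{R}}\C=\bigoplus_{0\ne\chi\in\Z^2}\nu_{\chi}$, where each weight subbundle $\nu_{\chi}$ has constant rank over the connected base $Z$, only finitely many $\nu_\chi$ are nonzero, and at least one is (since $\nu\ne0$). The $G$-action on $\mathbb{R}^{2n}$ induces one on $\nu$ by vector-bundle automorphisms covering the $G$-action on $Z$; for $g\in GL_2(\Z)\subset G$ the resulting isomorphism $\nu_z\to\nu_{gz}$ intertwines the $K$-actions up to the conjugation twist, hence carries $\nu_{\chi}$ onto $\nu_{g\cdot\chi}$ with $g\cdot\chi$ the standard $\Z^2$-action. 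Thus $\{\chi\in\Z^2\setminus\{0\}:\nu_{\chi}\ne0\}$ is a finite, nonempty, $GL_2(\Z)$-invariant set — impossible, since such a set must contain a full, infinite $GL_2(\Z)$-orbit. This excludes $\rho$, and with it every effective differentiable, holomorphic, or algebraic action of $G$ on $\A_{\C}^n$.

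I expect the main obstacle to be assembling the equivariant-transversality input with full rigor: the Smith--Borel acyclicity of $Z$ (needed so that $Z$ is connected and $\{\chi:\nu_\chi\ne0\}$ is an invariant of $Z$ rather than something that jumps), the fact that $Z$ is a genuine smooth submanifold carrying an honest $K$-equivariant normal bundle, Bochner linearization at points of $Z$, and a careful treatment of the topology on $G$ so that the restriction to $K$ really is a smooth torus action in each of the three categories. One must also resist a false slogan: ``a reductive group acting on $\A^n$ has a fixed point'' is not true in general, so a purely algebraic variant of the argument (running the same computation with the full torus $(\C^*)^2$ in place of $K$, available only for the algebraic case) has to invoke the genuine theorems that an algebraic torus action on $\A^n$ has a fixed point and that the fixed locus of a reductive group on a smooth variety is smooth. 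Finally, the argument is instructively sharp: it collapses exactly when $GL_2(\Z)$ is replaced by a \emph{finite} subgroup, whose orbits on $\Z^2\setminus\{0\}$ are finite — consistent with the fact that $(\C^*)^2\rtimes C$ with $C$ finite does act (even linearly, after enlarging $n$) on affine space.
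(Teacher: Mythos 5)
This theorem is not proved in the paper at all: it is imported verbatim from Derksen--Kutzschebauch--Winkelmann (their Theorem 2), so there is no internal proof to measure you against; I can only assess your argument on its own terms. On those terms it is correct in outline and is the natural route. The identification ${\rm Aut}_{\C}(\C^*\times\C^*)\cong(\C^*)^2\rtimes GL_2(\Z)$ is right; the compact torus $K=(S^1)^2$ is indeed normal in $G$ and acts nontrivially by effectiveness; Borel--Smith theory for torus actions on finite-dimensional $\Q$-acyclic spaces does give that $Z=(\mathbb{R}^{2n})^K$ is nonempty, $\Q$-acyclic, hence connected, and (for a smooth action) a closed submanifold of dimension $<2n$; Bochner linearization removes the trivial weight from the normal bundle; connectedness of $Z$ makes the ranks of the weight subbundles constant, so the finite nonempty set of nonzero $K$-weights is $GL_2(\Z)$-stable inside $\Z^2\setminus\{0\}$, contradicting the fact that every nonzero orbit there is infinite. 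Your closing cautions (no fixed-point theorem for general reductive groups on $\A^n$; the argument degenerates exactly for finite subgroups of $GL_2(\Z)$, which is consistent with the existence of linear actions of $(\C^*)^2\rtimes C$ for finite $C$) are also accurate.

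The one point you should not wave through is the first reduction: ``any algebraic or holomorphic action of $G$ is in particular a smooth action of $G$ with its natural Lie group topology.'' This is only true if ``action'' is understood in the transformation-group sense, i.e.\ with regularity (at least continuity) in the group variable, so that the identity component $(\C^*)^2$, and hence $K$, acts as a topological torus; then Bochner--Montgomery upgrades this to a smooth $K$-action and your machinery applies. For a bare abstract homomorphism $G\to{\rm Aut}_{\rm alg}(\C^n)$, $G\to{\rm Aut}_{\rm hol}(\C^n)$ or $G\to{\rm Diff}(\mathbb{R}^{2n})$, the restriction to $(S^1)^2$ could be wildly discontinuous, Smith--Borel theory and the weight decomposition are unavailable, and (as your own final remark shows) no finite-subgroup substitute can work, since every finite subgroup of $G$ embeds linearly in some $GL_N(\C)$. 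So the continuity hypothesis is genuinely load-bearing and must be stated as part of the theorem's reading; with it made explicit, your proof is complete at the level of a careful sketch.
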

The authors construct several examples of an algebraic group acting on a hypersurface in such a way that the action does not extend to the ambient space. 
Their methods only apply to hypersurfaces, and each example is based on a particular embedding of the variety. 

We begin with two examples of surfaces with absolutely nonextendable actions, each based on the following result. These surfaces have embedding dimension three. We construct a third example in {\it Section\,\ref{SL2-nonextend}}, using a homogeneous surface for $SL_2(\C)$ whose embedding dimension is four (i.e., not a hypersurface). 

\begin{theorem}\label{linearizable1} {\rm (\cite{Kraft.Russell.14}, Theorem A) }
   Every faithful action of a non-finite complex reductive group on $\A_{\C}^3$ is linearizable. 
\end{theorem}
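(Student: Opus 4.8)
The plan is to reduce the theorem, via the structure theory of reductive groups and the geometry of low-dimensional $G$-varieties, to a short list of possible identity components $G^{\circ}$, and then to dispatch each case using known linearization results together with an \emph{equivariant triviality} argument for the non-identity part of $G$. If $G$ acts faithfully on $X=\A_{\C}^3$ and $G^{\circ}$ is semisimple, then by the theorem of Kraft and Popov that semisimple group actions on $\A^3$ are linear, $X$ is a $3$-dimensional faithful $G^{\circ}$-module; since the only such modules come from $SL_2(\C)$, $PGL_2(\C)$ or $SL_3(\C)$, this already pins down $G^{\circ}$, and incidentally shows that larger semisimple groups such as $SL_2\times SL_2$ or $PGL_3$ cannot act on $\A^3$ at all. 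If $G^{\circ}$ is a torus, it has rank at most $3$ (a torus acting effectively on $\A^n$ has rank $\le n$, as one sees at a fixed point), and an effective action of $(\C^*)^3$ or $(\C^*)^2$ on $\A^3$ is linearizable by Bia\l{}ynicki-Birula. The remaining intermediate cases --- a torus factor times $SL_2(\C)$, $PGL_2(\C)$ or $SL_3(\C)$ --- reduce, via Schur's lemma applied to the commuting torus on the already linear semisimple isotypic components, to the preceding ones. Thus the essential open case is a faithful action of $\C^*$ on $\A^3$, possibly twisted by a finite group of outer symmetries.

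For this case I would invoke the Koras--Russell theorem: every $\C^*$-action on $\A_{\C}^3$ is linearizable. The mechanism, which I would sketch, runs as follows. The $\C^*$-action has a fixed point, and one first linearizes the induced action on the tangent space there; the possible ``quotient presentations'' of a smooth contractible $\C^*$-threefold are then classified, and one shows that the only obstruction to globalizing the local linearization would be for $X$ to be one of the exotic Koras--Russell $\C^*$-threefolds, the simplest being the cubic $\{x+x^2y+z^2+t^3=0\}\subset\A^4$. These are excluded because their Makar--Limanov invariant --- the intersection of the kernels of all locally nilpotent derivations of the coordinate ring --- is strictly larger than $\C$, whereas the Makar--Limanov invariant of $\A^3$ is $\C$. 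Hence no exotic $\C^*$-action exists, and the $\C^*$-action on $X$ is linear.

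Having made $G^{\circ}$ act linearly, identify $X$ with a $3$-dimensional $G^{\circ}$-module $W$. The full group $G$ normalizes $G^{\circ}$, hence acts on $W$ by algebraic automorphisms compatible with the $G^{\circ}$-module structure; the subgroup of ${\rm Aut}_{\C}(\A^3)$ preserving that structure is an affine algebraic group, and the induced action of the finite group $G/G^{\circ}$ --- together with any commuting toral factors not yet absorbed --- fits into an algebraic family over the quotient $X/\!/G$. Here one appeals to the generic isotriviality / equivariant triviality machinery, the main new ingredient of Kraft and Russell's paper, which shows that such a family is equivariantly trivial, so the $G$-action is conjugate to a linear one. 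This completes the reduction.

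The hard part is unmistakably the $\C^*$-case: it is the entire Koras--Russell program, that is, the fine classification of $\C^*$-threefolds with two-dimensional quotient together with the Makar--Limanov computation showing the Koras--Russell threefolds are not isomorphic to $\A^3$. By contrast, the semisimple part (Kraft--Popov), the higher-rank torus part (Bia\l{}ynicki-Birula), and the intermediate cases are comparatively soft, and the passage from the connected group to the full group, while technical, is a formal consequence of the equivariant-triviality framework.
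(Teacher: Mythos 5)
The first thing to say is that the paper contains no proof of this statement at all: it is quoted verbatim as Theorem A of Kraft--Russell \cite{Kraft.Russell.14} and used purely as an external input for the two introductory examples of absolutely nonextendable $O_2(\C)$-actions. So there is no internal argument to compare yours against; the only fair comparison is with the literature that the citation points to, and against that standard your outline reproduces the correct architecture (Kraft--Popov for semisimple $G^{\circ}$, Bia\l{}ynicki-Birula for tori of rank $2$ or $3$, and the Koras--Russell/Kaliman--Koras--Makar-Limanov--Russell theorem, with the Makar--Limanov invariant excluding the exotic threefolds, for $G^{\circ}=\C^*$).

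The genuine gap is in your last step. The cases you dispose of are exactly the previously known ones; the actual content of the cited theorem is the non-connected case, e.g.\ $G$ with $G^{\circ}=\C^*$ but $G\ne\C^*$ (such as $O_2(\C)$ or the normalizer $N$ of the torus in $SL_2$), and also non-connected $G$ with $G^{\circ}$ semisimple. For these, linearizing $G^{\circ}$ gives no control over the components of $G\setminus G^{\circ}$: the conjugation that linearizes the $\C^*$-action need not be equivariant for the finite part, and the subgroup of ${\rm Aut}_{\C}(\A^3)$ commuting with or normalizing a fixed linear $\C^*$ is huge (it contains non-linear equivariant automorphisms), so Schur-type arguments do not apply. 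Your proposal handles this by declaring it ``a formal consequence of the equivariant-triviality framework, the main new ingredient of Kraft and Russell's paper,'' which is circular: you are invoking the very paper whose theorem you set out to prove, and precisely for the part that was not already known. A proof attempt would have to supply that step --- set up the family of actions over the quotient, prove its generic isotriviality, and deduce equivariant triviality --- or else honestly present the whole thing as a reduction of the new cases to \cite{Kraft.Russell.14}, which is what the paper itself does by simply citing it. (Smaller glosses --- the existence of a fixed point for a torus action on $\A^n$, and the intermediate case $G^{\circ}\cong\C^*\times SL_2$ --- also need more than the one-line justifications you give, but they are repairable; the non-connected case is not repairable within your sketch.)
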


\begin{example} {\rm Consider $(V,G,\rho )$ where: 
\[
V=\C^*\times\C^* \quad\text{and}\quad 
G={\rm Aut}_{\C}(\C^*)\times {\rm Aut}_{\C}(\C^*)
=O_2(\C)\times O_2(\C)
\]
Let $W$ be the irreducible $O_2$-module of dimension two, where $\C^*$ acts with weights $\pm 1$. Let 
\[
W\oplus W=(\C x\oplus\C y)\oplus (\C z\oplus\C w)
\]
and define the $G$-equivariant surjection:
\[
\varphi :\C [W\oplus W]\to \C[W\oplus W]/(xy-1,zw-1)\cong_{\C}\mathcal{O}(V)
\]
This is a minimal presentation of $(V,G,\rho )$, since there is no faithful representation of $G$ on $\C^3$. On the other hand, $V$ admits closed embeddings in $\C^3$, for example, the hypersurface $xyz=1$. Let $S\subset \C^3$ be a hypersurface isomorphic to $V$, and suppose that the $G$-action on $S$ extends to $\C^3$.
By {\it Theorem\,\ref{linearizable1}}, this $G$-action is linear in some system of coordinates, and also faithful (since it is faithful on $S$), which gives a contradiction. Therefore, the $G$-action on $V$ is absolutely nonextendable. 
}    
\end{example}

\begin{example} {\rm For each positive odd integer $n$, 
let $W_n$ be the $O_2$-module of dimension four defined by:
\[
^{\zeta}(a,b,x,y)=(\zeta^{-2}a,\zeta^2b,\zeta^{-n}x,\zeta^ny)\,\,\ ,\,\, \zeta\in\C^*\quad\text{and}\quad \tau (a,b,x,y)=(b,a,y,x)
\]
The ideal $I_n=(ab-1,x+a^ny)$ is invariant, and for each $n$ we have:
\[
\C[W_n]/I_n\cong \mathcal{O}(V) \quad\text{where}\quad V\cong \C^*\times\C
\]
We thus obtain a family $\{\rho_n\}$ of faithful $O_2$-actions on $V$ defined by
\[
^\lambda (t,s)= (\lambda^{-2}t,\lambda^ns) \,\, ,\lambda\in\C^* \quad\text{and}\quad  ^\mu (t,s)=(t^{-1},t^ns) 
\]
with presentation $\varphi_n:\C[W_n]\to \C[W_n]/I_n$. The function $x+a^ny\in I_n$ is a variable of $\C[W_n]$, but not an $O_2$-invariant. This yields a sequence of closed embeddings $\mathcal{V}(I_n)\subset\mathcal{V}(x+a^ny)\subset\C^4$ where $\mathcal{V}(I_n)\cong V$ is an invariant surface but the hyperplane $\mathcal{V}(x+a^ny)\cong\C^3$ is not invariant. 

Let $S\subset\C^3$ be a hypersurface isomorphic to $V$, and suppose that $\rho_n$ extends to a faithful $O_2$-action on $\C^3$. 
By {\it Theorem\,\ref{linearizable1}}, this action is linear in some system of coordinates. There are two types of faithful representations of $O_2(\C)$ on $\C^3$, given by 
$(\lambda , \alpha )$ and $(\lambda ,\beta )$ where $\lambda\in\C^*$ and $\alpha^2=\beta^2=1$ for:
\[
\lambda =\begin{pmatrix} \lambda^m & 0 &0\cr 0&\lambda^{-m}&0\cr 0&0&1\end{pmatrix} 
\,\, (m\ge 1)\,\, ,\,\, 
\alpha = \begin{pmatrix} 0&1&0\cr 1&0&0\cr 0&0&1\end{pmatrix}\,\, ,\,\, 
\beta = \begin{pmatrix} 0&1&0\cr 1&0&0\cr 0&0&-1\end{pmatrix} 
\]
The set of fixed points of $\alpha$ is the coordinate plane $x=y$, and the set of fixed points of $\beta$ is the coordinate line $x=y, z=0$. The set of fixed points of $\mu$ in $V$ consists of the union of a line and an isolated point. Therefore, 
$\rho_n$ can only extend to an action of type $(\lambda ,\alpha)$. However, since $S$ is a hypersurface, every irreducible component of the intersection of a coordinate plane with $S$ is a curve, giving a contradiction. It follows that, for each odd $n\ge 1$, $(V,O_2(\C),\rho_n)$ is absolutely nonextendable and $\varphi_n$ is a minimal presentation. 
}    
\end{example}

Recall that a {\bf homogeneous space} for the group $G$ is an algebraic variety equipped with a transitive algebraic $G$-action. One of our main goals is to find minimal presentations of homogeneous spaces for $SL_2(k)$.
In {\it Section\,\ref{prelim}} 
we give an algorithmic method to find a presentation for any affine $SL_2$-variety $V$ using the theory of fundamental pairs. 
By this method, we find a minimal presentation of each homogeneous space for $SL_2(k)$; see {\it Theorem\,\ref{strong}}. 
Each is a quotient $SL_2(k)/H$ for some closed reductive subgroup $H$, 
and if $B=\mathcal{O}(SL_2(k))$, then $\mathcal{O}(SL_2(\C)/H)=B^H$. 
In order to find presentations, we do not use standard tools of invariant theory such as the Reynolds operator to find $B^H$. 
Instead, we use a fundamental pair $(D,U)$ of locally nilpotent derivations of $B$
to extend each ring of binary invariants $\krn D=k[x,y]^H$ to $B^H$ via integration by $U$. We use Felix Klein's description of these invariant rings. 
For example, integrating elements of $k[x,y]^{\Z_3}=k[x^3,xy,y^3]$ yields the subring $B^{\Z_3}$ generated by 4+3+4=11 elements (corresponding to degrees 3,2,3), but we reduce this number to four. 
We thus realize $SL_2(k)/\Z_3$ as a fiber of the quotient morphism $\pi :\A_k^4\to\A_k^1$ for the irreducible $SL_2$-module of degree three. 
In particular, $SL_2(k)/\Z_3$ is equivariantly isomorphic to a hypersurface in $\A_k^4$. 
Likewise, the binary icosahedral group ${\rm BI}_{120}$ is a subgroup of order 120.
Integrating elements of $k[x,y]^{{\rm BI}_{120}}=k[f,g,h]$ yields the subring $B^{{\rm BI}_{120}}$ generated by 13+21+31=65 elements (corresponding to degrees 12, 20, 30 of $f,g,h$), but we reduce this number to 13. 

Let $T\subset SL_2(\C )$ be the one-dimensional torus and let $N$ be its normalizer. Then $Y:=SL_2(\C)/T$ and $X:=SL_2(\C)/N$ are the only 
homogeneous surfaces for $SL_2(\C)$. This was shown by Popov \cite{Popov.73a,Popov.73b}. 
Note that $Y\cong\PP^1\times\PP^1\setminus\Delta$ (diagonal $\Delta$) and $X\cong \PP^2\setminus C$ (conic $C$). 
The automorphism groups of $Y$ and $X$ were described by Danilov and Gizatullin  \cite{Danilov.Gizatullin.77}. 
We give this description in {\it Section\,\ref{cylinder}}. 
The standard embedding of $Y$ in $\C^3$ is the hypersurface $Q=\{ (x,y,z)\,\vert\, xy-z^2=1\}$ and  
the automorphism group of $Y$ is the subgroup of automorphisms of $\C^3$ restricting to $Q$.
Consequently, there can be no absolutely nonextendable $G$-action on $Y$.
In contrast, we prove:

\begin{theorem}\label{non-extend1} Let $\rho :SL_2(\C)\to {\rm Aut}_{\C}(X)$ be the nontrivial $SL_2$-action on $X=SL_2(\C)/N$. 
\begin{itemize}
\item [{\bf (a)}] Any minimal presentation of $\rho$ has the form $\varphi :k[\V_4]\to\mathcal{O}(X)$ where $\V_4$ is the irreducible $SL_2$-module of dimension 5. 
\item [{\bf (b)}] The embedding dimension of $X$ equals 4. 
\item [{\bf (c)}] $(X , SL_2(\C) , \rho)$ is absolutely nonextendable. 
\end{itemize}
\end{theorem}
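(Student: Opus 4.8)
The plan is to treat the three parts in order, since each feeds into the next. For part (a), I would first compute the coordinate ring $\mathcal{O}(X) = B^N$ explicitly using the integration-by-$U$ method advertised in the introduction: start from $k[x,y]^N$, which by Klein's description is the ring of invariants of the binary dihedral (or the relevant finite) group sitting over the torus normalizer, and lift generators through the fundamental pair $(D,U)$ to get generators of $B^N$ as an $SL_2$-algebra. The output should be that $\mathcal{O}(X)$ is minimally generated, as an $SL_2$-module, by a copy of the irreducible module $\V_4$ of dimension $5$ (degree-$4$ binary forms), together perhaps with lower-dimensional pieces that turn out to lie in the subalgebra generated by $\V_4$. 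To prove minimality of the presentation one must show two things: that $\V_4$ generates $\mathcal{O}(X)$ as a $k$-algebra (so $\dim W \le 5$ suffices), and that no $SL_2$-module of dimension $\le 4$ surjects onto $\mathcal{O}(X)$. The latter follows from the structure of $SL_2$-modules: the only modules of dimension $\le 4$ are sums of $\V_0,\V_1,\V_2,\V_3$ with the dimension constraint, and one checks degree by degree (using the graded/weight structure of $\mathcal{O}(X)$, i.e. which $SL_2$-isotypic components appear in low degree) that no such $W$ admits an equivariant surjection — essentially because the generators of $\mathcal{O}(X)$ genuinely require a $5$-dimensional irreducible, and a trivial summand cannot substitute for it. The uniqueness of the \emph{form} of the minimal presentation is then the statement that $\V_4$ is forced, with no room for trivial summands, which again is read off from the isotypic decomposition in the generating degree.

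For part (b), the embedding dimension is the minimal $n$ with a closed embedding $X \hookrightarrow \A_k^n$, ignoring equivariance. Since $X \cong \PP^2 \setminus C$ is a smooth affine surface, $n \ge 3$ automatically, and I expect $n = 4$: the lower bound $n \ge 4$ should come from a local obstruction to embedding in $\A^3$, namely that $X$ is not a hypersurface — its coordinate ring is not a complete intersection / the cotangent or conormal module has the wrong rank, or equivalently the divisor class group or the structure of $\mathcal{O}(X)$ obstructs a single defining equation. Concretely, one computes $\mathcal{O}(X)$ (from part (a)) and checks that it cannot be generated by $3$ elements over $k$, which already forces embedding dimension (in the scheme-theoretic sense at a point, or the minimal number of ambient coordinates) to exceed $3$; then one exhibits an explicit closed embedding into $\A^4$, e.g. using four carefully chosen invariants among the generators found in part (a). This is mostly a computation once $\mathcal{O}(X)$ is in hand.

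Part (c) is the conceptual heart. We must show there is \emph{no} $SL_2$-equivariant closed embedding $X \hookrightarrow \A_k^4$, even though $X$ does embed in $\A^4$ non-equivariantly and does embed equivariantly in the $5$-dimensional module $\V_4$. The strategy: suppose such an equivariant embedding existed, giving a faithful $SL_2$-action on $\A_k^4$ (faithful because the action on $X$ is faithful — $N$ contains no normal subgroup of $SL_2$, so $\rho$ is nontrivial hence faithful as $SL_2$ is almost simple). Now invoke the classification of reductive group actions on low-dimensional affine space: by the analogue of Theorem~\ref{linearizable1} in dimension $4$ (or directly, known results on $SL_2$-actions on $\A^4$, e.g. that every faithful $SL_2$-action on $\A_{\C}^4$ is linearizable), the action would be linear in suitable coordinates, so $\A^4$ would be an $SL_2$-module of dimension $4$. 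But the only $4$-dimensional $SL_2$-modules are $\V_3$, $\V_2 \oplus \V_0$, $\V_1 \oplus \V_1$, $\V_1 \oplus 2\V_0$, $4\V_0$, and one checks that $X$ does not embed equivariantly as a closed subvariety of any of these — for instance by comparing rings of invariants (the quotient $X /\!/ SL_2$ is a point since $X$ is a single orbit, forcing the invariant ring of $W$ to restrict to constants on $X$, which rules out modules with large trivial or low-weight parts), or by comparing the $SL_2$-isotypic decomposition of $\mathcal{O}(X)$ in low degrees against what a degree-$\le 4$ module allows — this is exactly the obstruction already extracted in part (a). The main obstacle is establishing or correctly citing the linearization input in dimension $4$: unlike dimension $3$, linearizability of reductive actions on $\A^4$ is delicate, so I would either restrict to the $SL_2$ case where stronger results are available, or replace linearization by a more hands-on argument showing that any equivariant embedding $\mathcal{O}(X) \twoheadleftarrow k[W]$ with $\dim W = 4$ is impossible — which, given part (a)'s analysis that even the minimal presentation needs $\dim W = 5$, reduces (c) to checking that a closed \emph{embedding} (not just a presentation) into $\A^4$ is a presentation by a $4$-dimensional module only after linearization, closing the loop.
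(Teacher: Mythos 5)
Your parts (a) and (c) follow essentially the paper's route. The surjection $k[\V_4]\to\mathcal{O}(X)$ arises because $\mathcal{O}(X)$ is generated by the quadratic forms on $Y$, i.e.\ $\mathcal{O}(X)=k[\widehat{x_0^2}]$ with $x_0^2$ of kernel-degree $4$; minimality and non-extendability are then both consequences of a single lemma ({\it Lemma\,\ref{V4}}): an equivariant surjection $k[\V_4]\to B$ forces $A_d=0$ for $d$ odd and for $d=2$ in the induced grading of $A=\krn D$, while any algebraic $SL_2$-action on $\A_k^4$ is linearizable by Panyushev \cite{Panyushev.84}, and each of the five $4$-dimensional modules forces $A_d\neq 0$ for some $d\in\{1,2,3\}$ or $A=B$. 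Your instinct that the linearization input is available in the $SL_2$ (semisimple) case is correct, and your isotypic/low-degree obstruction is the same in substance as the paper's kernel-grading argument.

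The genuine gap is in part (b), the lower bound on the embedding dimension. The obstructions you propose cannot work: $X$ is a smooth affine surface, so $\mathcal{O}(X)$ is regular, hence locally a complete intersection and Gorenstein, and any closed surface in $\A_k^3$ is automatically a hypersurface (height-one primes of a UFD are principal), so no local, conormal or complete-intersection obstruction is available; the divisor class group is not decisive either, since smooth hypersurfaces $xy=p(z)$ in $\A_k^3$ already have nontrivial class groups. Moreover, ``check that $\mathcal{O}(X)$ cannot be generated by three elements'' is precisely the statement to be proved, not an argument for it. The paper's actual argument is global: $ML(X)=k$ (immediate from the fundamental pair, since $\krn\delta\cap\krn\upsilon=k$), and by Bandman--Makar-Limanov ({\it Theorem\,\ref{Band-ML}}) a smooth affine surface with trivial Makar-Limanov invariant is isomorphic to a hypersurface in $\A_{\C}^3$ if and only if it is isomorphic to $\{xy=p(z)\}$ with $p$ having simple roots; one then invokes \cite{Freudenburg.24}, Proposition 5.3, which says $X$ is not isomorphic to such a surface. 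For the upper bound, the paper does not search for four well-chosen functions: it observes that $\krn\varphi$ contains $2Y_2-4Y_0Y_4+Y_1Y_3+4$, a variable of $k[\V_4]\cong k^{[5]}$, so $X$ sits inside a coordinate hyperplane $\cong\A_k^4$ (necessarily a non-invariant one, which is exactly the point of part (c)). Without the $ML$/Bandman--Makar-Limanov step your part (b) is unproved, and with it the precise content of ``absolutely nonextendable'' in part (c) is left hanging.
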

The presentation $\varphi$ in part (a) of this theorem is given in {\it Section\,\ref{SL2-nonextend}}. 

Danielewski \cite{Danielewski.89} showed that $Y$ is noncancelative, that is, there exists a surface $\tilde{Y}$ with $Y\not\cong\tilde{Y}$ and 
$Y\times\C^1\cong\tilde{Y}\times\C^1$. 
In {\it Section\,\ref{cylinder}} we use the structure of ${\rm Aut}_{\C}(X)$ to show that $X$ is also noncancelative. 
We also settle the long-standing question of whether there exist inequivalent closed embeddings of $Y$ in $\A_k^3$; 
see \cite{Freudenburg.Moser-Jauslin.03}, Question 2. 
The following theorem is proved in {\it Section\,\ref{embedding}}.
\begin{theorem}\label{two-embed} 
Define regular functions $p,F,r,G\in k[x,y,z]=\mathcal{O}(\A_k^3)$ by
\[
p=xz-y^2\,\, ,\,\, F=xz-y^3 \,\,, \,\, r=yF-x \,\, ,\,\, G=x^{-1}(F^4+r^3)\, .
\]
\begin{itemize}
\item [{\bf (a)}] Every fiber of $p$ and $G$ other than the zero fiber is isomorphic to $Y=SL_2(k)/T$. 
\item [{\bf (b)}] Every fiber of $p$ is normal.
\item[{\bf (c)}] The zero fiber of $G$ is not normal. 
\end{itemize}
Consequently, there is no algebraic automorphism of $\A_k^3$ 
which carries the hypersurface $G=1$ to the hypersurface $p=1$.
\end{theorem}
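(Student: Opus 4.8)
\emph{Proof proposal.} The case of $p$ in part (a) is routine: $xz-y^{2}$ is a nondegenerate ternary quadratic form, so a linear change of coordinates carries it to the standard form, and every nonzero fiber of such a form is, after a scaling, the smooth quadric $Q=\{xy-z^{2}=1\}\cong Y$; its zero fiber is the quadric cone.

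The substance of the theorem is the case of $G$ in part (a), and I would attack it by writing down an explicit isomorphism of each nonzero fiber with a quadric. The key is the polynomial identity
\[
F\cdot v_{1}=G+r^{2},\qquad v_{1}:=\frac{F^{3}+yr^{2}}{x}\in k[x,y,z],
\]
which follows from $xG=F^{4}+r^{3}$ and $yF-r=x$ (so $F^{4}=xG-r^{3}$ and $r^{2}(yF-r)=xr^{2}$); here $v_{1}$ is a genuine polynomial since $F\equiv-y^{3}$ and $r\equiv-y^{4}\pmod x$, whence $x\mid F^{3}+yr^{2}$. Fixing $c\neq0$ and putting $u=F,\ v=v_{1},\ w=r$, the identity gives $uv-w^{2}=c$ on $\{G=c\}$, hence a morphism $\psi=(F,v_{1},r)\colon\{G=c\}\to Q_{c}:=\{uv-w^{2}=c\}\cong Y$. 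It then remains to prove that $\psi$ is an isomorphism, which I would do by checking it is bijective. Over $\{F\neq0\}$ one solves $x=(F^{4}+r^{3})/c$, then $y$, then $z$ in terms of $F,r$, so that $\{G=c\}\cap\{F\neq0\}=\operatorname{Spec}k[F^{\pm1},r]$ (in particular $\{G=c\}$ is generically reduced, hence reduced, being a hypersurface) and $\psi$ restricts there to the obvious isomorphism onto $Q_{c}\cap\{u\neq0\}=\operatorname{Spec}k[u^{\pm1},w]$; over $\{F=0\}$ one computes that $\{G=c\}\cap\{F=0\}$ is the disjoint union of the two lines $\{x=\epsilon,\ z=y^{3}/\epsilon\}$ with $\epsilon^{2}=-c$, on which $v_{1}=xy=\epsilon y$, so $\psi$ carries these isomorphically onto the two lines comprising $Q_{c}\cap\{u=0\}$. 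Thus $\psi$ is a birational morphism, bijective onto the normal surface $Q_{c}$; by Zariski's main theorem it is an open immersion, and being surjective it is an isomorphism. The main obstacle is exactly this bijectivity check --- above all the behaviour over $\{F=0\}$, which is where the nontrivial affine modification takes place, in the spirit of \cite{Freudenburg.Moser-Jauslin.03}.

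For part (b): a nonzero fiber $\{p=c\}$ is smooth because $(p_{x},p_{y},p_{z})=(z,-2y,x)$ vanishes only at the origin, which is not on it; the zero fiber is the quadric cone, a hypersurface with an isolated singularity, hence $S_{2}$ and regular in codimension one, so normal by Serre's criterion. For part (c): expanding $G=x^{-1}\big((xz-y^{3})^{4}+(x(yz-1)-y^{4})^{3}\big)$, the two $y^{12}$-terms cancel and one checks that every monomial of $G$ lies in $(x,y)^{2}$; consequently $G$ and all its first partial derivatives vanish identically along $L=\{x=y=0\}$, so the zero fiber $\{G=0\}$ is singular along the curve $L$ and therefore not normal (a normal surface is regular in codimension one --- and if $G$ is not reduced or not irreducible, the zero fiber is not a normal variety anyway).

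Finally, the consequence. Suppose $\alpha\in\operatorname{Aut}_{k}(\A_{k}^{3})$ carries $\{G=1\}$ onto $\{p=1\}$. By part (a) the surface $\{G=1\}\cong Y$ is smooth and irreducible, so $G-1$ is irreducible and generates the ideal of $\{G=1\}$, and likewise for $p-1$; since $\alpha^{*}$ sends $(p-1)$ onto $(G-1)$ and $k[x,y,z]^{*}=k^{*}$, we get $p\circ\alpha=c'G+(1-c')$ for some $c'\in k^{*}$, so $\alpha$ maps $\{G=t\}$ isomorphically onto $\{p=c't+1-c'\}$ for every $t$. If $c'\neq1$, the value $t=1-1/c'$ is nonzero and yields $\{G=t\}\cong\{p=0\}$: the left side is isomorphic to $Y$ by part (a), the right side is the quadric cone, and this is impossible since $Y$ is smooth. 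If $c'=1$, then $\alpha$ maps $\{G=0\}$ isomorphically onto $\{p=0\}$, contradicting parts (b) and (c). Either way we reach a contradiction, so no such $\alpha$ exists.
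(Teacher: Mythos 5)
Your proposal is correct, but for the substantive point --- the nonzero fibers of $G$ --- it takes a genuinely different route from the paper. Your $v_1$ is precisely the paper's auxiliary polynomial $H=F^{-1}(G+r^2)$, so both arguments turn on the same three functions $F,H,r$ and the relation $FH-r^2=G$; but where you exhibit the explicit morphism $\psi=(F,H,r)\colon\{G=c\}\to\{uv-w^2=c\}$ and prove it is an isomorphism by checking the charts $F\ne 0$ and $F=0$ and then applying Zariski's main theorem over the normal target, the paper never writes down a map: it introduces the Jacobian derivations $\Delta_2=\Delta_{(F,G)}$ and $\Delta_3=\Delta_{(G,H)}$, shows they are locally nilpotent with kernels $k[F,G]$ and $k[G,H]$, deduces $ML(\bar B)=k$ for $\bar B=B/(G-a)$, and then uses smoothness of the fiber together with Daigle's characterization of Danielewski surfaces to get $\bar B=k[\bar F,\bar H,\bar r]$ with prime relation $\bar F\bar H=\bar r^2+a$. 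Your route is more elementary and self-contained (no LND/Makar--Limanov machinery) and yields reducedness and irreducibility of the fibers as a byproduct; the paper's is shorter given the machinery it cites and stays within the LND framework used throughout. Two small points you should make explicit: in the chart $F\ne 0$ the naive elimination $z=(F+y^3)/x$ divides by $x$, so record the polynomial identity $F^3z=G+3r^2+3rx+x^2$ in $k[x,y,z]$ (multiply $xz=F+y^3$ by $F^3$ and use $yF=r+x$, $xG=F^4+r^3$), which gives $z\in k[F^{\pm1},r]$ on $\{G=c\}$ and justifies $\{G=c\}\cap\{F\ne0\}\cong\operatorname{Spec}k[F^{\pm1},r]$; and note that since every component of the hypersurface $\{G=c\}$ is two-dimensional while $\{G=c\}\cap\{F=0\}$ is a curve, each component meets the integral open piece $\{F\ne0\}$, so the fiber is irreducible and ``birational'' is legitimate. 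Your treatment of $p$, of part (c) (pinning down the singular line $x=y=0$ via $G\in(x,y)^2$, where the paper only asserts a line of singular points), and of the concluding non-equivalence argument (which the paper leaves implicit after ``Consequently'') are all correct and, if anything, more detailed than the paper's.
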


The problem of finding a minimal presentation for a group action on an affine variety 
generalizes the problem of determining whether a group action on affine space is linearizable. 
For example, Masuda and Petrie \cite{Masuda.Petrie.94} consider $O_2$-modules of dimension 5 defined by
\[
^{\lambda}(a,b,x,y,z)=(\lambda^{-1}a,\lambda^1b,\lambda^{-n}x,\lambda^ny,z)\,\,\ ,\,\, \lambda\in\C^*\quad\text{and}\quad \tau (a,b,x,y)=(b,a,y,x,z) 
\]
where $n\in\Z$, $n\ge 1$. Given a univariate polynomial $f(t)$ over $\C$ with $f(0)\ne 0$, 
the principal ideal $I_n=(b^nx+a^ny-f(ab)z)$ is $O_2$-invariant and defines a subvariety $V_n\subset\C^5$ with $V_n\cong \C^4$. 
This induces an $O_2$-action on $V_n$ with presentation $\varphi :\C [a,b,x,y,z]\to \C [a,b,x,y,z]/I_n$. 
The authors prove that, if $n\ge 2$, then it is possible to choose $f(t)$ so that this presentation is minimal, equivalently, the $O_2$-action on $\C^4$ is nonlinearizable. 

\setcounter{tocdepth}{1}
\tableofcontents


\section{Preliminaries}\label{prelim}
Throughout, $k$ is a field of characteristic zero and $\bar{k}$ is its algebraic closure. 
A {\bf $k$-domain} $B$ is an integral domain containing $k$. Given $f\in B$, $B_f$ denotes the localization $B_f=B[f^{-1}]$.
Given $n\in\N$, $B^{[n]}$ is the polynomial ring in $n$ variables over $B$.

\subsection{Locally nilpotent derivations} Let $B$ be an affine $k$-domain. The set of $k$-derivations of $B$ is ${\rm Der}_k(B)$. 
Given $D\in {\rm Der}_k(B)$, the kernel of $D$ is denoted $\krn D$. $D$ is {\bf locally nilpotent} if, given $b\in B$, $D^nb=0$ for $n\gg 0$. 
The set of locally nilpotent derivations of $B$ is ${\rm LND}(B)$. 
This set is in bijective correspondence with the set of $\G_a$-actions on ${\rm Spec}(B)$ via the exponential map, where $\G_a$ is the additive group of $k$. 
We say that $D\in {\rm LND}(B)$ is {\bf irreducible} if the image $DB$ is contained in no proper principal ideal of $B$. 
The Makar-Limanov invariant of $B$ is $ML(B)=\bigcap_{D\in{\rm LND}(B)}\krn D$, and if $X={\rm Spec}(B)$, then $ML(X)=ML(B)$. 

Given $D\in {\rm LND}(B)$, set $A=\krn D$. An element $s\in B$ is a {\bf local slice} for $D$ if $Ds\ne 0$ and $D^2s=0$. 
A local slice $s$ for $D$ is a {\bf slice} if $Ds=1$. The {\bf Slice Theorem} says that, for any local slice $s$, $B_{Ds}=A_{Ds}[s]\cong_kA_{Ds}^{[1]}$. 
The degree function $\deg_D$ on $B$ is the restriction of $\deg_s$ on $B_{Ds}$. It does not depend on the choice of local slice $s$. 

Given a positive integer $n$ and $f,g\in B$, the $n^{\text{th}}$ {\bf transvectant} of $f$ and $g$ is:
\[
[f,g]_n^D=\sum_{i=0}^n(-1)^{n-i}D^ifD^{n-i}g
\]
If $\deg_D(f)\le n$ and $\deg_D(g)\le n$ then $[f,g]_n^D\in A$. This is an important way to construct elements of the kernel of $D$. See \cite{Freudenburg.17}, 2.11.1. 

In \cite{Bandman.Makar-Limanov.01}, Bandman and Makar-Limanov study the class of all smooth affine surfaces $S$ over $\C$ with $ML(S)=\C$.
They show the following.
\begin{theorem}\label{Band-ML} Let $S$ be a smooth affine surface over $\C$ with $ML(S)=\C$. 
The following conditions are equivalent.
\begin{enumerate}[label=(\roman*)]
\item $S$ is isomorphic to a hypersurface in $\A_{\C}^3$. 
\item $S$ is isomorphic to $\{ (x,y,t)\in\A_k^3\, |\, xy=p(t)\}$ for some $p\in k[t]$ with simple roots.
\item $S$ admits a fixed-point free $\G_a$-action with reduced fibers.
\end{enumerate}
\end{theorem}
The equivalence of conditions (i) and (iii) is gotten by combining Theorem 1 and Theorem 2 in their paper. The equivalence of conditions (i) and (ii) is found in the proof of Lemma 5. 

See \cite{Freudenburg.17} for further details on locally nilpotent derivations. 

\subsection{Fundamental pairs of LNDs} Suppose that $B$ is an affine $k$-domain. We can view ${\rm Der}_k(B)$ as a Lie algebra with the usual bracket operation. 
The pair $(D,U)\in {\rm LND}(B)^2$ is a {\bf fundamental pair} for $B$ if $D$ and $U$ satisfy the relations
\[
[D,[D,U]]=-2D \quad\text{and}\quad [U,[D,U]]=2U
\]
i.e., $D$ and $U$ generate a Lie subalgebra isomorphic to $\mathfrak{sl}_2(k)$. 
Fundamental pairs for $B$ are in bijective correspondence with algebraic actions of $SL_2(k)$ on $B$; see \cite{Freudenburg.24}. 
In particular, an $SL_2(k)$-action on $B$ induces a fundamental pair $(D,U)$ for $B$, since the upper and lower unipotent triangular subgroups of $SL_2(k)$, 
each being isomorphic to $\G_a$, give rise to locally nilpotent derivations of $B$ which satisfy these relations. 

Suppose that $(D,U)$ is a fundamental pair for $B$. Set $E=[D,U]$. 
By \cite{Andrist.Draisma.Freudenburg.Huang.Kutzschebauch.ppt}, Theorem 2.6, $E$ is a semi-simple derivation which induces a $\Z$-grading of $B$ given by 
$B=\bigoplus_{d\in\Z}B_d$ where $B_d=\krn (E-dI)$. The kernel $A=\krn D$ is a graded affine subalgebra and $A=\bigoplus_{d\in\N}A_d$ is $\N$-graded.   
In addition:
\[
A_0=\krn D\cap\krn U=B^{SL_2(k)}
\]
Given $f\in A_d$, $\widehat{f}$ denotes the vector space in $B$ with basis $\{ U^jf\, |\, 0\le j\le d\}$. Note that $U(U^if)=U^{i+1}f$, so $U$ restricts to $k[\widehat{f}]$. In addition, $D(U^if)=i(d-i+1)U^{i-1}f$ so $D$ restricts to $k[\widehat{f}]$; see \cite{Freudenburg.24}, Lemma 3.5. 
Therefore, $k[\widehat{f}]$ is a $(D,U)$-invariant subalgebra, and is the smallest invariant subalgebra of $B$ containing $f$. 

Suppose that $B'$ is another $k$-domain endowed with the fundamental pair $(D',U')$. 
A $k$-algebra morphism $\varphi :B'\to B$ is {\bf equivariant} if $D\varphi =\varphi D'$ and $U\varphi =\varphi U'$. 

Given $n\in\N$, let $\V_n=kX_0\oplus\cdots\oplus kX_n$ be the irreducible $SL_2$-module of dimension $n+1$ over $k$. 
Then $\V_n$ induces the fundamental pair $(D_n,U_n)$ for the polynomial ring $k[\V_n]=k[X_0,\hdots X_n]$ defined by
\[
D_n(X_i)=X_{i-1} \,\, (1\le i\le n) \,\, ,\,\, D_n(X_0)=0
\]
and:
\[
U_n(X_i)=(i+1)(n-i)X_{i+1}\,\, (0\le i\le n-1) \,\, ,\,\, U(X_n)=0
\]
For the induced degree function on $k[\V_n]$, each $X_i$ is homogeneous and $\deg X_i=n-2i$, $0\le i\le n$. 
\begin{lemma}\label{presentation} {\rm (see \cite{Andrist.Draisma.Freudenburg.Huang.Kutzschebauch.ppt})} 
Let $B$ be an affine $k$-domain with fundamental pair $(D,U)$ and $A=\krn D$. 
Let $A=\bigoplus_{d\in\N}A_d$ be the $\N$-grading induced by $(D,U)$, and 
let $A=k[f_1,\hdots ,f_r]$ where $f_i\in A_{d_i}$, $1\le i\le r$.
\begin{itemize}
\item [{\bf (a)}] $B=k[\widehat{f_1},\hdots ,\widehat{f_r}]$
\item [{\bf (b)}]  
There exists an equivariant surjection $\varphi :k[\V_{d_1}\oplus\cdots\oplus\V_{d_r}]\to B$ with
$\varphi (\V_{d_i})=\widehat{f_i}$.
\end{itemize} 
\end{lemma}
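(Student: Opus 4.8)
The plan is to prove part (a) by identifying $B$ with the $(D,U)$-invariant subalgebra generated by the modules $\widehat{f_i}$, and then to deduce part (b) by writing $\varphi$ down explicitly on a vector-space basis and checking equivariance block by block.

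\emph{Part (a).} I would set $C=k[\widehat{f_1},\ldots,\widehat{f_r}]$. Each $k[\widehat{f_i}]$ is a $(D,U)$-invariant subalgebra of $B$ (via the formulas $U(U^jf)=U^{j+1}f$ and $D(U^jf)=j(d-j+1)U^{j-1}f$ recalled above), so the Leibniz rule makes $C$ itself $(D,U)$-invariant; that is, $C$ is at once a subring and an $SL_2$-submodule of $B$. Since $f_i\in A_{d_i}\subseteq A=\krn D$, we get $A=k[f_1,\ldots,f_r]\subseteq C$, hence $\krn(D|_C)=C\cap\krn D=A=\krn D$. The remaining point is that this forces $C=B$, and here I would invoke the structure of the rational $SL_2$-module $B$: local nilpotence of $D$ and $U$ makes $B$ locally finite, hence (characteristic zero) completely reducible, with every irreducible summand $\cong\V_n$ generated as an $SL_2$-module by its one-dimensional space of $D$-highest-weight vectors, which lies in $\krn D=A$. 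Consequently $B$ is spanned over $k$ by $\bigcup\widehat f$ as $f$ ranges over a homogeneous $k$-basis of $A$. Finally, any homogeneous $f\in A$ is a $k$-linear combination of monomials $m=f_{i_1}\cdots f_{i_s}$, each of which lies in $C$; since $C$ is $U$-stable, $U^jm\in C$ for all $j$, so $U^jf\in C$ and therefore $\widehat f\subseteq C$. This yields $B\subseteq C$, completing (a).

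\emph{Part (b).} Writing $X_0^{(i)},\ldots,X_{d_i}^{(i)}$ for the standard basis of the $i$-th summand $\V_{d_i}$, the ring $k[\V_{d_1}\oplus\cdots\oplus\V_{d_r}]$ is the polynomial ring in these variables with the fundamental pair obtained by summing the $(D_{d_i},U_{d_i})$. I would define $\varphi$ on generators by $\varphi(X_j^{(i)})=c_{ij}U^jf_i$, where $c_{i0}=1$ and $c_{i,j+1}=c_{ij}\big/\big((j+1)(d_i-j)\big)$ for $0\le j\le d_i-1$; all the denominators are nonzero, and an arbitrary choice of images of variables always extends uniquely to a $k$-algebra homomorphism. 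Using the two displayed formulas for the action of $D$ and $U$ on $\widehat{f_i}$, one verifies that the requirement $U\varphi=\varphi U_{d_i}$ on the $i$-th block is precisely the recursion defining the $c_{ij}$, and that $D\varphi=\varphi D_{d_i}$ is then automatic, being the same recursion re-indexed; since $D,U$ and the $D_{d_i},U_{d_i}$ are derivations, equivariance on generators propagates to all of $k[\V_{d_1}\oplus\cdots\oplus\V_{d_r}]$. By construction $\varphi(\V_{d_i})=\widehat{f_i}$ (all $c_{ij}\ne 0$), so the image of $\varphi$ contains $k[\widehat{f_1},\ldots,\widehat{f_r}]$, which equals $B$ by (a); hence $\varphi$ is surjective.

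The only step that is not purely formal is the implication ``$\krn(D|_C)=\krn D\ \Longrightarrow\ C=B$'' used in part (a); it relies on complete reducibility together with local finiteness of the $SL_2$-action — equivalently, local nilpotence of $D$ and $U$ — which is exactly the structural input cited from \cite{Andrist.Draisma.Freudenburg.Huang.Kutzschebauch.ppt} and \cite{Freudenburg.24}. All remaining steps are bookkeeping with the explicit $\mathfrak{sl}_2(k)$-action formulas already on hand.
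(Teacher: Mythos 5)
Your argument is correct. Note first that the paper itself offers no proof of this lemma — it is quoted from the cited preprint of Andrist--Draisma--Freudenburg--Huang--Kutzschebauch — so there is no in-text proof to compare against; what you give is the natural argument, and it works. For (a), the chain ``each $k[\widehat{f_i}]$ is $(D,U)$-stable $\Rightarrow$ $C$ is an $SL_2$-stable subalgebra containing $A=\krn D$, and complete reducibility plus local finiteness of the rational $SL_2$-module $B$ forces $B$ to be spanned by the spaces $\widehat{f}$ with $f$ running over homogeneous elements of $A$'' is exactly the right structural input, and you correctly identify that local finiteness/complete reducibility is the one non-formal ingredient, coming from the correspondence between fundamental pairs and algebraic $SL_2$-actions. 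For (b), your normalization $c_{i,j+1}=c_{ij}/\bigl((j+1)(d_i-j)\bigr)$ is forced by $U\varphi=\varphi U_{d_i}$, and the computation that $D\varphi=\varphi D_{d_i}$ yields the same recursion is right, as is the propagation of equivariance from generators via the derivation property. Two small points you use implicitly and could flag: at the boundary $j=d_i$ the identity $U\varphi(X_{d_i}^{(i)})=\varphi(U_{d_i}X_{d_i}^{(i)})=0$ requires $U^{d_i+1}f_i=0$, and the claim $\varphi(\V_{d_i})=\widehat{f_i}$ (rather than a proper quotient image) uses that $U^jf_i\neq 0$ for $0\le j\le d_i$; both are standard $\mathfrak{sl}_2$-theory for a highest-weight vector $f_i\in A_{d_i}$ and are exactly the content of the restriction statements (Lemma 3.5 of the $SL_2$-actions paper) already recalled before the lemma, so the proof stands as written.
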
 
Part (b) of this lemma thus gives a specific closed embedding of ${\rm Spec}(B)$ into an $SL_2(k)$-module of dimension
$r+d_1+\cdots +d_r$, gotten by calculating a set of homogeneous generators of $A$. 

Let $n\ge 1$ be given and write $n=2m+\epsilon$ where $\epsilon\in\{ 0,1\}$. 
It is well-known that the quadratic invariants for $\V_n$ are:
\[
T_{2i}=\sum_{r=0}^{2i}(-1)^rX_rX_{2i-r}\,\, , \,\, 0\le i\le m
\]
The preceding lemma implies the following. 
\begin{corollary}\label{quadratics}
The vector space of quadratic forms in $k[\V_n]$ decomposes into $SL_2$-invariant
subspaces $\widehat{T}_0\oplus\widehat{T}_2\oplus\cdots\oplus\widehat{T}_{2m}$.
\end{corollary}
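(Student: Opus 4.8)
The plan is to identify each subspace $\widehat T_{2i}$ with a distinct irreducible summand of the space $Q\subseteq k[\V_n]$ of quadratic forms, and then to finish by matching dimensions. First note that $Q$ is an $SL_2$-submodule: $D_n$ and $U_n$ are derivations, hence preserve the total degree in $X_0,\dots,X_n$. Next, a short computation gives $D_nT_{2i}=0$ for $0\le i\le m$ — applying Leibniz to $T_{2i}=\sum_{r=0}^{2i}(-1)^rX_rX_{2i-r}$ and reindexing one of the two resulting sums, they cancel term by term. Each monomial $X_rX_{2i-r}$ occurring in $T_{2i}$ has $E$-degree $(n-2r)+(n-2(2i-r))=2n-4i\ge 0$, and $X_0X_{2i}$ occurs with nonzero coefficient, so $0\ne T_{2i}\in A_{2n-4i}\cap Q$, where $A=\krn D_n$. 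By the construction of $\widehat f$ in the paragraph preceding Lemma \ref{presentation}, $\widehat T_{2i}=\operatorname{span}_k\{U_n^jT_{2i}\mid 0\le j\le 2n-4i\}$ is then a $(D_n,U_n)$-invariant subspace of dimension $2n-4i+1$; it lies in $Q$ because $U_n$ preserves total degree, and as an $\mathfrak{sl}_2$-module it is generated by the highest-weight vector $T_{2i}$ of weight $2n-4i$, hence is isomorphic to the irreducible module $\V_{2n-4i}$.

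I would then argue that the sum $W:=\widehat T_0+\widehat T_2+\cdots+\widehat T_{2m}$ is direct and equals $Q$. Directness is automatic: the weights $2n-4i$ ($0\le i\le m$) are pairwise distinct, so $\widehat T_0,\dots,\widehat T_{2m}$ are pairwise non-isomorphic irreducible submodules of the finite-dimensional (hence semisimple) module $Q$, and non-isomorphic irreducibles lie in distinct isotypic components. To see $W=Q$ it then suffices to compare dimensions:
\[
\dim_k W=\sum_{i=0}^m(2n-4i+1)=(m+1)(2n-2m+1),
\]
and a case split on the parity of $n$ identifies the right-hand side with $\binom{n+2}{2}=\dim_k Q$ in both cases $n=2m$ and $n=2m+1$. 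Hence $W=Q=\widehat T_0\oplus\widehat T_2\oplus\cdots\oplus\widehat T_{2m}$, as claimed.

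I do not expect a genuine obstacle. The only substantive input is the equality $W=Q$, obtained above from the dimension count — equivalently, from the classical decomposition $S^2\V_n\cong\bigoplus_{i=0}^m\V_{2n-4i}$. (Alternatively one could note that $\dim_k(A\cap Q)$ equals the number of irreducible constituents of $Q$, namely $m+1$; since $T_0,\dots,T_{2m}$ are $m+1$ linearly independent elements of $A\cap Q$, they form a basis of $A\cap Q$, i.e.\ a complete set of highest-weight vectors of $Q$, and $Q$ is the sum of the submodules they generate.) The most laborious points are merely the verification that $D_nT_{2i}=0$ and the parity bookkeeping in the dimension comparison.
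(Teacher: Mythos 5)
Your proof is correct and is in substance the argument the paper intends: the corollary is presented there as an immediate consequence of Lemma \ref{presentation} together with the stated well-known fact that $T_0,T_2,\dots,T_{2m}$ are the quadratic elements of $\krn D_n$, and your write-up is exactly that highest-weight decomposition carried out in detail. The one genuine difference is that you do not take the completeness of the list $T_0,\dots,T_{2m}$ for granted: you verify $D_nT_{2i}=0$ directly, note $T_{2i}$ is a nonzero highest-weight vector of weight $2n-4i$, get directness from the distinct weights, and then close the gap with the dimension count $\sum_{i=0}^m(2n-4i+1)=\binom{n+2}{2}$ (equivalently $S^2\V_n\cong\bigoplus_{i=0}^m\V_{2n-4i}$), which makes the proof self-contained where the paper cites classical invariant theory. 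Only a cosmetic quibble: $D_n$ and $U_n$ preserve total degree not because they are derivations, but because they send each variable to a linear form.
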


See \cite{Freudenburg.24, Andrist.Draisma.Freudenburg.Huang.Kutzschebauch.ppt} for further details on fundamental pairs. 

\subsection{Finite subgroups of $SL_2(\C )$ and their binary invariants}\label{finite}

\subsubsection{Representations} The finite subgroups of $SL_2(\C )$ were given by Felix Klein in \cite{Klein.1884}.
The list is comprised of two countably infinite families and three sporadic groups:  the cyclic groups $\Z_n$ and binary dihedral groups ${\rm BD}_{4n}$ $(n\ge 2)$, the binary tetrahedral group ${\rm BT}_{24}$, the binary octahedral group ${\rm BO}_{48}$ and the binary icosahedral group ${\rm BI}_{120}$. 
The special unitary group $SU(2)\subset SL_2(\C )$ gives a representation of the group of unit quaternions: 
\[
\begin{pmatrix} z&-w\cr \bar{w}&\bar{z}\end{pmatrix}\,\, ,\,\, \vert z\vert^2+\vert w\vert^2=1
\]
$SU(2)$ is generated by rotations in 3-space (up to sign), meaning that $SU(2)$ contains the symmetry group of each of the platonic solids studied by Klein. 
Moreover, any finite subgroup of $SL_2(\C )$ can be conjugated to $SU(2)$, so classifying finite subgroups of $SL_2(\C )$ reduces to classifying those in $SU(2)$. 
The representations of finite groups and corresponding invariants given below are those derived by Klein. 
Isomorphic finite subgroups of $SL_2(\C )$ are known to be conjugate, so a single representation of each is sufficient. 
Moreover, in these results, the field $\C$ can be replaced by any algebraically closed field $k$ of characteristic zero. 

Given $n\ge 2$, let $\zeta_n\in\C$ be an element of order $n$. Define 
\[
\psi_n  = \begin{pmatrix} \zeta_n & 0\cr 0 & \zeta_n^{-1}\end{pmatrix} 
\]
and elements:
\[
\vec{i} = \begin{pmatrix} i& 0\cr 0 & -i \end{pmatrix} \quad
\vec{j} = \begin{pmatrix} 0 & 1\cr -1 & 0 \end{pmatrix} \quad 
\vec{k} =  \begin{pmatrix} 0& i\cr i & 0 \end{pmatrix} \quad
\sigma = {\textstyle\frac{1}{2}}\left(I+\vec{i}+\vec{j}+\vec{k}\right)
\]
Then $\vec{i},\vec{j},\vec{k}$ are elements of order 4 which generate the quaternion group ${\rm BD}_8$, $\vec{j}$ normalizes $\langle\psi_n\rangle$,
and $\sigma$ is an element of order 6 which normalizes ${\rm BD}_8$.
Define also
\[
\kappa = {\textstyle\frac{1}{\sqrt{5}}}\begin{pmatrix} \zeta_5^4-\zeta_5 & \zeta_5^2-\zeta_5^3\cr \zeta_5^2-\zeta_5^3&\zeta_5-\zeta_5^4\end{pmatrix}
\]
which is of order 4. We have
\begin{enumerate}
\item $\Z_n=\langle \psi_n \rangle$ \qquad $(n\ge 2)$
\smallskip
\item ${\rm BD}_{4n}=\langle \Z_{2n} ,\vec{j}\,\,\rangle$ \quad $(n\ge 2)$
\smallskip
\item ${\rm BT}_{24}=\langle {\rm BD}_8 ,\sigma^2 \,\,\rangle=\langle \sigma , \vec{j}\, |\, \sigma^3=(\sigma^{-1}\vec{j}\, )^3=\vec{j}^2=-I \,\,\rangle \cong {\rm SL (2,3)}$
\smallskip
\item ${\rm BO}_{48}=\langle {\rm BT}_{24},\psi_8\,\, \rangle=\langle \sigma , \psi_8\, |\, \sigma^3=\psi_8^4=(\sigma\psi_8)^2=-I\,\,\rangle $ 
\smallskip
\item ${\rm BI}_{120}=\langle \psi_5,\kappa ,\vec{j}\rangle \cong {\rm SL (2,5)}$ 
\end{enumerate}
where $\psi_8$ normalizes ${\rm BT}_{24}$. The only proper normal subgroup of ${\rm BI}_{120}$ is the center $\{\pm I\}$.\footnote{
A simpler representation of ${\rm BI}_{120}$ is given by $\langle \sigma ,\rho\, |\, \rho^5=\sigma^3=(\sigma\rho)^2=-I\,\,\rangle$ where 
$ \rho = \frac{1}{2}\left(v I+w\vec{i}+\vec{j}\right)$ for $v=\frac{1+\sqrt{5}}{2}$ and $w=v^{-1}=\frac{-1+\sqrt{5}}{2}$. }

\subsubsection{Binary invariant rings} Each finite group $H\subset SL_2(\C )$ acts on the polynomial ring $\C [x,y]$ and the invariant subalgebra
is 3-generated: $\C [x,y]^H=\C [r,s,t]$. The prime relation of the generators defines a rational Du Val singularity of type $A_n, D_n, E_6,E_7,E_8$, respectively, as follows.

\begin{align*}
\text{cyclic group} \,\,\Z_n\,\,  (n\ge 2) && A_n && r^2+s^2+t^n \\
\text{binary dihedral group} \,\, {\rm BD}_{4n} \,\, (n\ge 2) && D_n  && tr^2+s^2+t^{n+1}  \\
\text{binary tetrahedral group} \,\, {\rm BT}_{24} && E_6 && r^4+s^3+t^2 \\
\text{binary octahedral group} \,\, {\rm BO}_{48} && E_7 && r^3s+s^3+t^2 \\
\text{binary icosahedral group} \,\, {\rm BI}_{120} && E_8 && r^5+s^3+t^2 
\end{align*}
Specific generators for the representations given above are given as follows. 

\begin{enumerate}
\item $\C [x,y]^{\Z_n}=\C [r_n,s_n,t]=\C [x^n+y^n,x^n-y^n,xy]$ where $r_n^2+s_n^2=t^n$.
\smallskip
\item $\C [x,y]^{\text{BD}_{4n}}=\C [r_{2n},ts_{2n},t^2]$ where $(t^2)r_{2n}^2-(ts_{2n})^2=4(t^2)^{n+1}$. 
 \smallskip
\item $\C [x,y]^{\text{BT}_{24}}=\C [v_1,v_2,v_3]$ where $v_1^3+v_3^4=v_2^2$ for polynomials:
\begin{eqnarray*}
v_1 &=& (x^8+y^8)+14x^4y^4 \\
v_2 &=& (x^{12}+y^{12})-33(x^8y^4+x^4y^8) \\
v_3 &=& xy(x^4-y^4) 
\end{eqnarray*}
\item $\C [x,y]^{\text{BO}_{48}}=\C [v_1,v_2v_3,v_3^2]$ where $(v_3^2)((v_3^2)^2+v_1^3)=(v_2v_3)^2$. 
\smallskip
\item $\C [x,y]^{\text{BI}_{120}}=\C [w_1,w_2,w_3]$ where $1728\,w_1^5+w_2^3=w_3^2$ for polynomials:
\begin{eqnarray*}
w_1 &=& xy(x^{10}+11x^5y^5-y^{10}) \\
w_2 &=& (x^{20}+y^{20})-228(x^{15}y^5-x^5y^{15})+494x^{10}y^{10} \\
w_3 &=& (x^{30}+y^{30})+522(x^{25}y^5-x^5y^{25})-10005(x^{20}y^{10}+x^{10}y^{20})
\end{eqnarray*}
\end{enumerate}


\section{Minimal presentations of $SL_2$-homogeneous threefolds}\label{present}

In this section, assume that $k$ is algebraically closed. 

\subsection{Left and right actions}
The actions of $SL_2(k)$ on itself by left and right multiplication, respectively, commute. If we restrict the right action to a closed reductive subgroup $H\subset SL_2(k)$, then we get an $SL_2$-action on the quotient $SL_2(k)/H$ induced by left multiplication. In other words, the right action determines the variety, and the left action determines the $SL_2$-action on that variety.

Let $B=k[x,y,z,w]$ where $xw-yz=1$, the coordinate ring of $SL_2(k)$. 
The fundamental pair $(\delta ,\upsilon )$ which defines the left action is given by:
\[
\delta\begin{pmatrix} x& y\cr z&w \end{pmatrix}=
\begin{pmatrix} 0&0\cr 1&0\end{pmatrix}\begin{pmatrix} x& y\cr z&w \end{pmatrix}
= \begin{pmatrix} 0&0\cr x&y\end{pmatrix} \quad\text{and}\quad 
\upsilon\begin{pmatrix} x& y\cr z&w \end{pmatrix}=\begin{pmatrix} 0&1\cr 0&0\end{pmatrix}\begin{pmatrix} x& y\cr z&w \end{pmatrix} = 
\begin{pmatrix} z&w\cr 0&0\end{pmatrix}
\]
Set $A=\krn\delta =k[x,y]$.
\subsection{Generators of coordinate rings}
Given $n\in\N$ consider the irreducible representation $\V_n$ of $SL_2(k)$.
Let $k[\V_n]=k[X_0,\hdots ,X_n]$ with induced fundamental pair $(D_n,U_n)$, as above. 
In addition to the quadratic polynomials $T_{2i}$ we also need the cubic element of $\krn D_n$ given by:
\[
G=3X_0^2X_3-3X_0X_1X_2+X_1^3 \,\, (n\ge 3)
\]
Given the integer $n\ge 2$, 
let $\varphi_n:k[\V_n]\to B$ be the equivariant $k$-algebra homomorphism defined by $\varphi_n(X_n)=z^n+w^n$. 
\begin{lemma}\label{images} 
For each integer $1\le i\le \frac{1}{2}n$ :
\[
\varphi_n(T_{2i})=\displaystyle 2\frac{(n!)^2}{(2i)!}\, (xy)^{n-2i}
\]
\end{lemma}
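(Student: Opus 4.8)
The plan is to compute $\varphi_n(T_{2i})$ directly from the definition of the quadratic invariant $T_{2i}=\sum_{r=0}^{2i}(-1)^rX_rX_{2i-r}$, by understanding the images $\varphi_n(X_j)$ for all $j$. Since $\varphi_n$ is equivariant, it is determined by $\varphi_n(X_n)=z^n+w^n$ together with the relation $D_n(X_j)=X_{j-1}$; that is, $\varphi_n(X_{j-1})=\delta(\varphi_n(X_j))$, so $\varphi_n(X_{n-\ell})=\delta^\ell(z^n+w^n)$. First I would work out $\delta^\ell(z^n+w^n)$ explicitly. From $\delta(z)=x$, $\delta(w)=y$, $\delta(x)=\delta(y)=0$, we get by the binomial-type formula $\delta^\ell(z^n)=\frac{n!}{(n-\ell)!}x^\ell z^{n-\ell}$ and likewise $\delta^\ell(w^n)=\frac{n!}{(n-\ell)!}y^\ell w^{n-\ell}$, hence
\[
\varphi_n(X_{n-\ell})=\frac{n!}{(n-\ell)!}\bigl(x^\ell z^{n-\ell}+y^\ell w^{n-\ell}\bigr).
\]
Reindexing with $j=n-\ell$, this reads $\varphi_n(X_j)=\frac{n!}{j!}\bigl(x^{n-j}z^{j}+y^{n-j}w^{j}\bigr)$.

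Next I would substitute into $T_{2i}$. Writing $c_j=\frac{n!}{j!}$, we have
\[
\varphi_n(T_{2i})=\sum_{r=0}^{2i}(-1)^r c_r c_{2i-r}\bigl(x^{n-r}z^{r}+y^{n-r}w^{r}\bigr)\bigl(x^{n-2i+r}z^{2i-r}+y^{n-2i+r}w^{2i-r}\bigr).
\]
Expanding the product gives four types of terms. The "pure" terms $x^{2n-2i}z^{2i}$ and $y^{2n-2i}w^{2i}$ carry coefficient $\sum_{r=0}^{2i}(-1)^r c_r c_{2i-r}$, which I expect to vanish: it is (up to the factor $(n!)^2$) the alternating sum $\sum_r (-1)^r\binom{2i}{r}\big/\binom{2i}{r}$-type expression, more precisely $(n!)^2\sum_{r=0}^{2i}\frac{(-1)^r}{r!\,(2i-r)!}=\frac{(n!)^2}{(2i)!}\sum_{r=0}^{2i}(-1)^r\binom{2i}{r}=0$ since $2i\ge 1$. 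So the pure terms drop out, consistent with $\varphi_n(T_{2i})$ being a multiple of $(xy)^{n-2i}$. The surviving "mixed" terms are $x^{n-r}z^r\,y^{n-2i+r}w^{2i-r}$ and $x^{n-2i+r}z^{2i-r}\,y^{n-r}w^{r}$; using the $SL_2$ relation $xw-yz=1$ I would rewrite each monomial $x^a z^b y^c w^d$ with the goal of collapsing everything onto powers of $xy$. After collecting, the coefficient of $(xy)^{n-2i}$ should be $\sum_{r}(-1)^r c_r c_{2i-r}$ times a combinatorial factor coming from expanding $(xw)^{\,?}=(1+yz)^{\,?}$; tracking this carefully is where the real work lies.

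The main obstacle, and the step I would budget the most care for, is precisely this monomial bookkeeping: after substituting $xw=1+yz$ (or $yz=xw-1$) one gets, for each $r$, a binomial expansion whose cross terms in general produce lower powers of $xy$ as well, and one must verify that all contributions except the $(xy)^{n-2i}$ term cancel across the sum over $r$. I would organize this by first reducing to the case where, in each mixed monomial, exactly one of the "off-diagonal" pairs is eliminated, expressing the mixed term as a polynomial in $xy$ via a single identity like $x^{a}z^{b}y^{b}w^{a}=(xw)^{a-b}(xy)^{b}(zw/\!\ldots)$ — here I'd need to be a bit more careful, perhaps working instead in the localization $B_x$ where $w=(1+yz)/x$ so that each image lies in $k[x,x^{-1},y,z]$ and the computation becomes a clean Laurent-polynomial identity, then clearing denominators at the end. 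An alternative, cleaner route: since $\V_n$ sits inside $k[x,y]$-binary forms and $T_{2i}$ corresponds under that identification to the classical transvectant $[f,f]$ of the generic binary $n$-form $f$, one could instead identify $\varphi_n$ with the map sending the binary form to a specific form on $SL_2$ and quote the classical value of the transvectant; but the direct computation above is self-contained and I expect it to close with the stated constant $2\frac{(n!)^2}{(2i)!}$ once the cancellations are verified, noting the factor $2$ arises because the two families of mixed terms contribute equally by the $x\leftrightarrow y$, $z\leftrightarrow w$ symmetry.
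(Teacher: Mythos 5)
Your setup is correct and matches the paper's: the formula $\varphi_n(X_j)=\frac{n!}{j!}(x^{n-j}z^j+y^{n-j}w^j)$, the substitution into $T_{2i}$, and the cancellation of the pure terms $x^{2n-2i}z^{2i}+y^{2n-2i}w^{2i}$ via $\sum_{r=0}^{2i}(-1)^r\binom{2i}{r}=0$ are all exactly as in the paper. But the proof stops at the one step that actually requires an idea: you flag the evaluation of the mixed-term sum as ``where the real work lies,'' worry that substituting $xw=1+yz$ will produce lower powers of $xy$ whose cancellation must be verified, and end with an expectation rather than a computation. That is a genuine gap, since the lemma is precisely the statement that this sum has the claimed value.

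The gap is easily filled, and the anticipated bookkeeping is illusory. Pull out $(xy)^{n-2i}$ from both families of mixed terms; what remains is
\[
\frac{(n!)^2}{(2i)!}\,(xy)^{n-2i}\sum_{r=0}^{2i}(-1)^r\binom{2i}{r}\bigl(x^{2i-r}y^rz^rw^{2i-r}+x^ry^{2i-r}z^{2i-r}w^r\bigr),
\]
and the alternating sum is, by the binomial theorem read backwards, exactly $(yz-xw)^{2i}+(xw-yz)^{2i}=1+1=2$ because $xw-yz=1$ and $2i$ is even. This is the paper's argument, and it requires no localization. Your own route would also have closed immediately had you carried it out: in $B_x$ with $w=(1+yz)/x$, the two families become $x^{n-2i}y^{n-2i}\sum_r(-1)^r\binom{2i}{r}(yz)^r(1+yz)^{2i-r}=\bigl((1+yz)-yz\bigr)^{2i}(xy)^{n-2i}$ and the analogous sum with the roles of $(yz)$ and $(1+yz)$ exchanged, each collapsing to $(xy)^{n-2i}$ with no lower-order terms at all. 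So the approach is sound and essentially the paper's, but as written the decisive identity is asserted as an expectation, not proved.
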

Observe that, for $i=0$, we have $\varphi_n(T_0)=\varphi_n(X_0^2)=(n!)(x^n+y^n)$. 

\begin{proof} Recall that, for any positive integer $b$ :
\begin{equation}\label{factorialsum}
\sum_{a=0}^b(-1)^a{b\choose a}=0
\end{equation}
By equivariance, given $0\le j\le n$ we have:
\[
\varphi_n(X_j)=\varphi_nD^{n-j}(X_n)=\delta^{n-j}\varphi_n(X_n)=\delta^{n-j}(z^n+w^n)=(n!/j!)(x^{n-j}z^j+y^{n-j}w^j)
\]
Given $1\le i\le \frac{1}{2}n$ we have:
\begin{eqnarray*}
\varphi_n(T_{2i}) &=& \sum_{j=0}^{2i}(-1)^j\,\frac{n!}{j!}\frac{n!}{(2i-j)!}(x^{n-j}z^j+y^{n-j}w^j)(x^{n-2i+j}z^{2i-j}+y^{n-2i+j}w^{2i-j}) \\
&=&\frac{(n!)^2}{(2i)!}\sum_{j=0}^{2i}(-1)^j{2i\choose j}\left( x^{2n-2i}z^{2i}+y^{2n-2i}w^{2i}+x^{n-j}y^{n-2i+j}z^jw^{2i-j}+x^{n-2i+j}y^{n-j}z^{2i-j}w^j\right) 
\end{eqnarray*}
Since the terms $x^{2n-2i}z^{2i}+y^{2n-2i}w^{2i}$ do not involve $j$, equation (\ref{factorialsum}) implies that these terms cancel. Therefore:
\begin{eqnarray*}
\varphi_n(T_{2i}) 
&=& \frac{(n!)^2}{(2i)!}\sum_{j=0}^{2i}(-1)^j{2i\choose j}\left( x^{n-j}y^{n-2i+j}z^jw^{2i-j}+x^{n-2i+j}y^{n-j}z^{2i-j}w^j\right) \\
&=& \frac{(n!)^2}{(2i)!}\sum_{j=0}^{2i}(-1)^j{2i\choose j}\left( x^{(n-2i)+(2i-j)}y^{(n-2i)+j}z^jw^{2i-j}+x^{(n-2i)+j}y^{(n-2i)+(2i-j)}z^{2i-j}w^j\right) \\
&=&  \frac{(n!)^2}{(2i)!}\, (xy)^{n-2i}\sum_{j=0}^{2i}(-1)^j{2i\choose j}\left( x^{2i-j}y^jz^jw^{2i-j}+x^jy^{2i-j}z^{2i-j}w^j\right) \\
&=&  \frac{(n!)^2}{(2i)!}\, (xy)^{n-2i}\left( (yz-xw)^{2i}+(xw-yz)^{2i}\right) \\
&=&  2\frac{(n!)^2}{(2i)!}\, (xy)^{n-2i}
\end{eqnarray*}
\end{proof}

\begin{lemma}\label{alg-independent} 
Given odd $m\ge 3$, the quadratic polynomial $T_{m+1}\in k[\V_{2m}]$ 
has degree $2(m-1)$ and 
\[
T_{m+1},UT_{m+1},\hdots ,U^{2(m-1)}T_{m+1}
\]
are algebraically independent over $k$. 
\end{lemma}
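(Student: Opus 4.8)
The plan is to separate the (nearly immediate) degree assertion from the algebraic independence, and to deduce the latter from a dominance statement about a transvectant map between spaces of binary forms.

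For the degree: $T_{m+1}$ is one of the quadratic invariants of Corollary~\ref{quadratics}, hence lies in $A=\krn D_{2m}$, and since $m+1$ is even every monomial $X_rX_{m+1-r}$ of $T_{m+1}$ is $E$-homogeneous of weight $(2m-2r)+(2m-2(m+1-r))=2(m-1)$; thus $T_{m+1}\in A_{2(m-1)}$. By the $\mathfrak{sl}_2$-string relations recalled in Section~\ref{prelim}, $T_{m+1}$ then generates an irreducible $SL_2$-submodule $\widehat{T_{m+1}}\cong\V_{2(m-1)}$ of the quadratic forms of $k[\V_{2m}]$ with weight basis $T_{m+1},UT_{m+1},\dots,U^{2(m-1)}T_{m+1}$ (so these are nonzero and $U^{2(m-1)+1}T_{m+1}=0$); this is what ``degree $2(m-1)$'' means. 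Note that among the summands $\widehat T_{2i}\cong\V_{4m-4i}$ ($0\le i\le m$) of Corollary~\ref{quadratics}, $\widehat{T_{m+1}}$ is the \emph{only} one isomorphic to $\V_{2(m-1)}$, because $4m-4i=2(m-1)$ forces $i=(m+1)/2$, which is an integer precisely because $m$ is odd.

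For the algebraic independence I would pass to classical invariant theory. Identify $\V_{2m}$ with the space $\mathscr P_{2m}$ of binary forms of degree $2m$ in $\xi,\eta$; since an $SL_2$-isomorphism carries the weight basis $(X_i)$ to scalar multiples of the monomials $\xi^{2m-i}\eta^i$, this identifies $k[\V_{2m}]$ with the coefficient ring $k[\mathscr P_{2m}]$ by a diagonal linear change of coordinates, which does not affect algebraic independence. The $(m+1)$st transvectant $(\,\cdot,\cdot\,)_{m+1}\colon\mathscr P_{2m}\times\mathscr P_{2m}\to\mathscr P_{2m-2}$ is $SL_2$-equivariant, nonzero, and symmetric (because $(-1)^{m+1}=1$); writing $\Phi(F)=(F,F)_{m+1}=\sum_{l=0}^{2m-2}q_l(F)\,\xi^{2m-2-l}\eta^l$, polarization gives $\Phi\not\equiv0$, the $q_l$ are quadratic forms in the coefficients of $F$, and equivariance makes $\langle q_0,\dots,q_{2m-2}\rangle$ a nonzero $SL_2$-submodule of the quadratics of $k[\mathscr P_{2m}]$; since the contraction $S^2(\mathscr P_{2m})\to\mathscr P_{2m-2}$ is surjective onto an irreducible module, its transpose embeds $\mathscr P_{2m-2}\cong\V_{2(m-1)}$ as exactly this submodule. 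By the uniqueness above it coincides with $\widehat{T_{m+1}}$, so $\{q_0,\dots,q_{2m-2}\}$ and $\{T_{m+1},UT_{m+1},\dots,U^{2(m-1)}T_{m+1}\}$ are two bases of one and the same $(2m-1)$-dimensional space of polynomials; hence one family is algebraically independent over $k$ if and only if the other is. It thus suffices to show the quadratic morphism $\Phi\colon\mathscr P_{2m}\to\mathscr P_{2m-2}$ is dominant, and since $\mathrm{char}\,k=0$ with irreducible target this follows once $d\Phi_F(H)=2(F,H)_{m+1}$ is surjective for a single $F$. I would take $F_0=\xi^{2m}+\eta^{2m}$ and evaluate $(F_0,\xi^{2m-k}\eta^k)_{m+1}$ for $0\le k\le 2m$ from the classical transvectant formula: for $\xi^{2m}$ only the term with all derivatives in $\xi$ survives, giving $(\xi^{2m},\xi^{2m-k}\eta^k)_{m+1}$ a nonzero scalar times $\xi^{(2m-2)-l}\eta^l$ with $l=k-m-1$ when $m+1\le k\le 2m$ (and $0$ otherwise), and symmetrically $(\eta^{2m},\xi^{2m-k}\eta^k)_{m+1}$ a nonzero scalar times $\xi^{(2m-2)-l}\eta^l$ with $l=m-1+k$ when $0\le k\le m-1$; for each $k\neq m$ exactly one of the two summands is nonzero, so no cancellation occurs. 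As $k$ runs over $0,\dots,2m$ the exponent $l$ attains every value in $\{0,\dots,2m-2\}$, so $H\mapsto(F_0,H)_{m+1}$ is surjective onto $\mathscr P_{2m-2}$; hence $\Phi$ is dominant, the $q_l$ are algebraically independent, and therefore so is $T_{m+1},UT_{m+1},\dots,U^{2(m-1)}T_{m+1}$.

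The step I expect to be most delicate is the representation-theoretic translation — confirming that $\langle q_0,\dots,q_{2m-2}\rangle$ is precisely the isotypic summand $\widehat{T_{m+1}}$ and not a proper piece (this is exactly where oddness of $m$ enters), together with bookkeeping of the normalization constants in the identification $\V_{2m}\cong\mathscr P_{2m}$ and in the transvectant formula. By contrast, the explicit computation with $F_0=\xi^{2m}+\eta^{2m}$, though it is the crux, is short and completely concrete, and the passage from ``$\Phi$ dominant'' to ``$d\Phi$ surjective at one point'' is the standard generic-smoothness argument valid in characteristic zero.
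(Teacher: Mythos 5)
Your proof is correct, but it takes a genuinely different route from the paper's. The paper argues directly with the family $T_{m+1},UT_{m+1},\dots,U^{2(m-1)}T_{m+1}$: it records which monomials divisible by $X_0$ or $X_{2m}$ each $U^jT_{m+1}$ supports, evaluates the Jacobian matrix of the family at $X_1=\cdots=X_{2m-1}=0$, checks that its rank there is $2m-1$, and concludes by the Jacobian criterion. You instead use multiplicity-one of $\V_{2(m-1)}$ inside the quadratics of $k[\V_{2m}]$ (which is exactly where oddness of $m$ enters, as you note) to identify the span of your family with the span of the coefficient functions of the self-transvectant map $F\mapsto(F,F)_{m+1}$ on binary forms of degree $2m$, and then prove that quadratic map is dominant by showing its differential $H\mapsto 2(F_0,H)_{m+1}$ is onto at $F_0=\xi^{2m}+\eta^{2m}$. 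At the computational core the two proofs are cousins: your point $F_0$ is precisely the paper's evaluation locus $X_1=\cdots=X_{2m-1}=0$, and surjectivity of the differential there is the same rank-$(2m-1)$ statement after the invertible linear change between the two bases of the submodule. What differs is the scaffolding: your route (equivariance, self-duality bookkeeping, the transpose of the contraction $S^2(\V_{2m})\to\V_{2(m-1)}$, and the characteristic-zero passage from one surjective differential to dominance) buys conceptual clarity — the lemma becomes the statement that a covariant is dominant, and the role of $m$ odd is transparent — while the paper's monomial-support computation is elementary and self-contained, with no transvectant normalizations or module identifications to track. One cosmetic remark: in your dominance step, irreducibility of the target is not needed; lower semicontinuity of the rank of the differential together with the characteristic-zero fact that the generic rank equals the dimension of the closure of the image already yields dominance from a single point of full rank.
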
 

\begin{proof} 
For $0\le j\le m-1$, $U^jT_{m+1}$ supports the monomial $X_0X_{m+1+j}$ but supports no monomial divisible by $X_{2m}$. By degree considerations, $U^jT_{m+1}$ supports no other monomial divisible by $X_0$. 

For $j=m-1$, $U^{m-1}T_{m+1}$ supports the monomial $X_0X_{2m}$ but supports no other monomial divisible by $X_0$ or $X_{2m}$. 

For $m\le j\le 2(m-1)$, $U^jT_{m+1}$ supports the monomial $X_{j-m+1}X_{2m}$ but supports no monomial divisible by $X_0$. By degree considerations, $U^jT_{m+1}$ supports no other monomial divisible by $X_{2m}$. 

Therefore, the $(2m-1)\times (2m+1)$ jacobian matrix 
\[  
\mathcal{J}= \frac{\partial (T_{m+1},UT_{m+1},\hdots ,U^{2(m-1)}T_{m+1})}{\partial (X_0,X_1,\hdots ,X_{2m})}  
\]
evaluated at $X_1=\cdots =X_{2m-1}=0$ has exactly one nonzero entry in each column except for the middle ($m^{\text{th}}$) column, which is a column of zeros; 
and has exactly one nonzero entry in each row except for the middle ($m^{\text{th}}$) row, whose first and last entries are nonzero. 
This implies that the rank of $\mathcal{J}(X_0,0,\hdots ,0,X_{2m})$ equals $2m-1$, so the rank of $\mathcal{J}$ equals $2m-1$. 
It follows that $T_{m+1},UT_{m+1},\hdots ,U^{2(m-1)}T_{m+1}$ are algebraically independent. 
See \cite{Freudenburg.17}, 3.2.3. 
\end{proof}

\begin{theorem}\label{threefolds} The following equalities hold. 
\smallskip
\begin{itemize}
\item [{\bf (a)}] $B^{\Z_n}=k[\widehat{x^n+y^n},\widehat{xy}]$ for all $n\ge 2$. 
\medskip
\item[{\bf (b)}] $B^{\Z_n}=k[\widehat{x^n+y^n}]$ for all odd $n\ge 3$.
\medskip
\item[{\bf (c)}] $B^{\Z_{2n}}=k[\widehat{f_n}]$ for all odd $n\ge 3$, where
$f_n=x^{2n}+(xy)^n+y^{2n}$. 
\medskip
\item [{\bf (d)}] $B^{{\rm BD}_{4n}}=k[\widehat{r_{2n}}]$ for all $n\ge 2$, where $r_{2n}=x^{2n}+y^{2n}$.
\medskip
\item [{\bf (e)}] $B^{{\rm BT}_{24}}=k[\widehat{v_3}]$ for $v_3=xy(x^4-y^4)$.
\medskip
\item [{\bf (f)}] $B^{{\rm BO}_{48}}=k[\widehat{v_1}]$ for $v_1=x^8+14x^4y^4+y^8$.
\medskip
\item [{\bf (g)}] $B^{{\rm BI}_{120}}=k[\widehat{w_1}]$ for $w_1=xy(x^{10}+11x^5y^5-y^{10})$.
Moreover, $B^{{\rm BI}_{120}}$ is a UFD.
\end{itemize}
\end{theorem}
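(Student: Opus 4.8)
The plan is to prove all seven equalities by the same mechanism: for each finite group $H\subset SL_2(\C)$, use Klein's description of the binary invariant ring $A^H := k[x,y]^H$ (listed in Section~\ref{finite}), identify a single generator $f$ of $A^H$ whose ``hat module'' $k[\widehat f]$ already captures $B^H$, and then invoke Lemma~\ref{presentation}(a). The key observation is that $A^H$ is graded by $\deg$ (equivalently by the $E=[\delta,\upsilon]$-grading, since $A_0 = B^{SL_2}=k$), so $A^H = \bigoplus_{d}A^H_d$, and by Lemma~\ref{presentation}(a) applied to the fundamental pair $(\delta,\upsilon)$ on $B=\mathcal O(SL_2)$ we have $B^H = k[\,\widehat g_1,\ldots,\widehat g_r\,]$ whenever $A^H = k[g_1,\ldots,g_r]$ with $g_i$ homogeneous. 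So everything reduces to showing that a cleverly chosen \emph{single} homogeneous element $f\in A^H$ generates $A^H$ together with the elements one can already produce \emph{inside} $k[\widehat f]$ using $\delta,\upsilon$ and transvectants — i.e. that the other Klein generators of $A^H$ lie in $k[\widehat f]\cap A = k[x,y]$.

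First I would set up the general reduction carefully: note $\widehat f$ has basis $\{\upsilon^j f : 0\le j\le d\}$ where $d=\deg f$, that $\delta,\upsilon$ restrict to $k[\widehat f]$ (as recalled after Corollary~\ref{quadratics}), and that $A\cap k[\widehat f] = \krn(\delta|_{k[\widehat f]})$ is a graded subalgebra of $k[x,y]$ containing $f$ and closed under transvectants $[\,\cdot,\cdot\,]_n^\delta$. Then for each group I must show this subalgebra equals $A^H$. For (a): $A^{\Z_n}=k[x^n+y^n, x^n-y^n, xy] = k[x^n+y^n, xy]$ once we check $x^n - y^n$ is produced — but in fact $x^n-y^n\notin\krn\delta$ restricted? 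No: $\Z_n\subset SL_2$ and $x^n - y^n$ \emph{is} $\Z_n$-invariant; the point is $k[x^n+y^n,xy]$ already contains everything needed because $(x^n-y^n)^2 = (x^n+y^n)^2 - 4(xy)^n$, so $A^{\Z_n}=k[x^n+y^n, xy]$ as rings, giving $B^{\Z_n}=k[\widehat{x^n+y^n},\widehat{xy}]$. For (b), when $n$ is odd I must show $xy\in k[\widehat{x^n+y^n}]$: here I expect to use a transvectant — compute $[\,x^n+y^n,\ \upsilon^j(x^n+y^n)\,]_n^\delta$ or similar low transvectants of $\widehat{x^n+y^n}$ and extract $xy$ (up to scalar) using that $\delta,\upsilon$ on $B$ are the explicit $2\times2$ matrices; the parity of $n$ is what makes $xy$ (degree $0$ under $E$) appear. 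Parts (c)–(g) are the same idea applied to $f_n = x^{2n}+(xy)^n+y^{2n}$, $r_{2n}=x^{2n}+y^{2n}$, $v_3$, $v_1$, $w_1$ respectively: in each case I must verify (i) $f\in A^H$ (immediate from Klein's formulas — e.g. $f_n$ is $\Z_{2n}$-invariant, $v_3$ is ${\rm BT}_{24}$-invariant, etc.), and (ii) the remaining two Klein generators of $A^H$ are polynomial expressions in $f$ and transvectants/$\upsilon$-images of $f$ that land back in $k[x,y]$. The relations among Klein's generators (the Du Val equations $r^2+s^2+t^n$, $r^4+s^3+t^2$, $1728w_1^5+w_2^3=w_3^2$, etc.) are exactly what let me solve for the ``extra'' generators in terms of $f$ once one more invariant is extracted via a transvectant.

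The final clause — $B^{{\rm BI}_{120}}$ is a UFD — I would handle separately. Since ${\rm BI}_{120}$ is perfect (its only proper normal subgroup is the center, as noted in the footnote to Section~\ref{finite}, so the abelianization is trivial), it has no nontrivial characters; hence by a standard descent argument (e.g. the Samuel/Magid criterion that $\operatorname{Pic}(\operatorname{Spec}B^H)$ injects into $H^1(H,\mathcal O(\operatorname{Spec}B)^\times)$, and the units of $B=\mathcal O(SL_2)$ are just $k^\times$ since $SL_2$ is a factorial affine variety with only constant invertible functions) we get that $B^H$ is again factorial whenever $B$ is factorial and $H$ has no nontrivial characters. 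I would cite this and conclude.

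The main obstacle I anticipate is step (ii) for the sporadic groups (e), (f), (g): producing the remaining Klein generators ($v_1,v_2$ for ${\rm BT}_{24}$ from $v_3$; $v_2v_3$ and the relation for ${\rm BO}_{48}$; $w_2,w_3$ for ${\rm BI}_{120}$ from $w_1$) purely via transvectants $[\,\cdot,\cdot\,]^\delta_n$ of $\upsilon$-images of a single degree-$d$ generator. This is a concrete but delicate computation — one must choose the right pairs of elements $\upsilon^i f,\upsilon^j f$ and the right transvectant order $n$ so that the output is a nonzero scalar multiple of the desired binary invariant, and degree/weight bookkeeping under the $E$-grading is the tool that guarantees this is forced. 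For ${\rm BI}_{120}$, where $\deg w_1 = 12$ and $\deg w_2=20$, $\deg w_3=30$, this is the heaviest case; I would organize it by first extracting $w_2$ (degree $20$) as a transvectant of $w_1$ with itself or with a $\upsilon$-image, then $w_3$ from the syzygy, checking nonvanishing of one leading coefficient in each case. The abstract reduction via Lemma~\ref{presentation} is routine; all the real work is these explicit transvectant identities.
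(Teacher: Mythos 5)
Your overall mechanism is the paper's: start from Klein's generators of $k[x,y]^H$, use Lemma~\ref{presentation} to generate $B^H$ by hat-modules, and then use transvectants of $\upsilon$-images to show that all but one (or two) of the hat-modules are redundant; your UFD argument for ${\rm BI}_{120}$ (perfect group, $B^*=k^*$, Samuel's criterion) is also exactly the paper's. But your proof of (a) rests on a false identity: $(x^n-y^n)^2=(x^n+y^n)^2-4(xy)^n$ does \emph{not} put $x^n-y^n$ into $k[x^n+y^n,xy]$ — that subring is invariant under the swap $x\leftrightarrow y$, while $x^n-y^n$ is anti-invariant, so $A^{\Z_n}=k[x^n+y^n,x^n-y^n,xy]$ is strictly larger than $k[x^n+y^n,xy]$. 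The true statement, and the actual content of (a), is that $x^n-y^n$ lies in $k[\widehat{x^n+y^n},\widehat{xy}]\cap \ker\delta$, which is strictly larger than $k[x^n+y^n,xy]$; the paper obtains it from the transvectant $[\upsilon(x^n+y^n),\upsilon(xy)]_1^{\delta}=n(x^n-y^n)$. Your general setup (the hat-subalgebra is $(\delta,\upsilon)$-invariant and closed under transvectants) contains the fix, but as written the argument for (a) does not use it and is incorrect.

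The same ``square-root'' confusion blocks part (c), which is where the real difficulty lies — not in (e)--(g), each of which the paper settles by two short explicit evaluations (e.g.\ $\psi(T_2)=-4v_1$, $\psi(G)=8v_2$ for ${\rm BT}_{24}$). For $\Z_{2n}$ with $n$ odd one must produce $xy$ itself inside $k[\widehat{f_n}]$, and transvectants of $\upsilon$-images of $f_n$ only yield elements such as $(xy)^{n-1}$, $(xy)^{n-2}(x^{2n}\pm y^{2n})$, $(xy)^{3n-4}$; the Du Val relation for a cyclic group cannot be ``solved'' for $xy$ from these. The paper needs two further ideas that your plan does not supply: first, the nonvanishing of the image of the quadratic covariant $T_{n+1}$, proved by combining the algebraic independence of $T_{n+1},UT_{n+1},\dots,U^{2(n-1)}T_{n+1}$ (Lemma~\ref{alg-independent}) with a dimension count for $k[\V_{2n}]/\ker\varphi$; second, a descent from powers of $xy$ to $xy$ itself, carried out by localizing at $(xy)^{n-1}$, using that the units of the kernel subalgebra are $k^*$, and analyzing which divisors of $(xy)^{n-1}$ in $k[x,y]$ can occur. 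Your sketch for (b) is fine in spirit (it is the paper's Lemma~\ref{images}, whose $2i=n-1$ case uses the odd parity of $n$), but without the two ingredients above the argument for (c) — and, as written, for (a) — does not go through.
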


\begin{proof} 
Part (a). 
For any integer $n\ge 2$ we have:
\[
[\upsilon (x^n+y^n) , \upsilon (xy)]_1^{\delta}=n(x^n-y^n)
\]
Therefore:
\[
x^n-y^n\in  k[\widehat{x^n+y^n},\widehat{xy}]
\implies \widehat{x^n-y^n}\subset k[\widehat{x^n+y^n},\widehat{xy}]
\implies B^{\Z_n}=k[\widehat{x^n+y^n},\widehat{xy}]
\]
Part (b). From part (a), 
$B^{\Z_n}=k[\widehat{x^n+y^n},\widehat{xy}]$, and from {\it Lemma\,\ref{images}}, $xy\in k[\widehat{x^n+y^n}]$. 
Therefore:
\[
\widehat{xy}\subset k[\widehat{x^n+y^n}]
\implies B^{\Z_n}=k[\widehat{x^n+y^n}]
\]
Part (c). By part (a), $B':=B^{\Z_n}=k[\widehat{z^{2n}+y^{2n}},\widehat{xy}]$. 
Let $A'=A\cap B'=k[x^n,xy,y^n]$. Define
$B''=k[\widehat{f_n}]\subseteq B'$ and $A''=A'\cap B''$.
We claim that $xy\in A''$. This claim, once verified, implies $x^{2n}+y^{2n}\in A''$ as well, and $B^{\Z_{2n}}=B''$. 

In order to prove the claim, let $(X_i)$ be coordinates for $\V_{2n}$ and define the equivariant surjection:
\[
\varphi :k[\V_{2n}]\to B''\,\, ,\,\, \varphi (X_{2n})=f_n=z^{2n}+(zw)^n+w^{2n}
\]
By direct computation we find:
\[
p:=[\upsilon^2f_n,\upsilon^2f_n]_2^{\delta}
=-4n(2n-1)(xy)^{n-2}\left( 2(n-1)(x^{2n}+y^{2n})+(8n-5)(xy)^n\right)\in A''
\]
Since $k[f_n,p]\cong k^{[2]}$ it follows that $\dim A''=2$, so $\dim B''=3$. 

We have $\deg T_{n+1}=2(n-1)$ and 
$\varphi (T_{n+1})\in A^{\prime}_{2(n-1)}=k\cdot (xy)^{n-1}$.
If $T_{n+1}\in\krn\varphi$ then $\widehat{T}_{n+1}\subset\krn\varphi$.
By {\it Lemma\,\ref{alg-independent}}, $T_{n+1},UT_{n+1},\hdots ,U^{2(n-1)}T_{n+1}$ are algebraically independent. Since $k[\V_{2n}]\cong k^{[2n+1]}$ it follows that
\[
\dim B''=\dim k[\V_{2n}]/\krn\varphi \le (2n+1)-(2n-1)=2
\]
which gives a contradiction. Therefore, $\varphi (T_{n+1})=c(xy)^{n-1}$ for some $c\in k^*$ and
$(xy)^{n-1}\in A''$. 

By direct computation, we find:
\begin{enumerate}[label=(\roman*)]
\item $[\upsilon (f_n),\upsilon ((xy)^{n-1})]_1^{\delta}=8n(2n-1)(2n-3)(xy)^{n-2}(x^{2n}-y^{2n})\in A''$.
\medskip
\item 
$[\upsilon^2(f_n),\upsilon^2((xy)^{n-1})]_2^{\delta}
=4n(xy)^{n-3}\left( (2n^2-5n+2)(x^{2n}+y^{2n})-2(n-1)(xy)^n\right)\in A''$
\medskip
\item $[\upsilon ((xy)^{n-2}(x^{2n}-y^{2n})),\upsilon ((xy)^{n-1}]_1^{\delta}
=(n-1)(xy)^{2n-4}(x^{2n}+y^{2n})\in A''$ 
\end{enumerate}
It follows that:
\begin{eqnarray*}
&&(xy)^{n-1}(xy)^{n-3}((2n^2-5n+2)(x^{2n}+y^{2n})-2(n-1)(xy)^n)\\
&=&(xy)^{2n-4}((2n^2-5n+2)(x^{2n}+y^{2n})-2(n-1)(xy)^n)\in A''
\end{eqnarray*}
Subtracting from this $(2n^2-5n+2)(xy)^{2n-4}(x^{2n}+y^{2n})$ shows 
$(xy)^{3n-4}\in A''$.
From this we find
\[
(xy)^{3n-4}((xy)^{n-1})^{-3}=(xy)^{-1}\in A''_{(xy)^{n-1}}
\implies
xy\in A''_{(xy)^{n-1}}
\]
where $A''_{(xy)^{n-1}}$ is the localization of $A''$ at $(xy)^{n-1}$. Since $xy$ is a unit of $A''_{(xy)^{n-1}}$ and the units of $A''$ are $k^*$, it follows that $xy=t^m$ for some divisor $t$ of $(xy)^{n-1}$ in $A''$ and $m\in\Z$. Up to scalar multiples, the only divisors of $(xy)^{n-1}$ in $A=k[x,y]$ are $x^ay^b$, $0\le a,b\le n-1$. 
So $xy=x^{am}y^{bm}$ in $A$, which implies 
$a=b=m=1$. Therefore, $xy\in A''$ and the claim is proved.

Part (d). By part (a) we have $B^{\text{BD}_{4n}}\subset B^{\Z_{2n}}=k[\widehat{x^{2n}+y^{2n}},\widehat{xy}]$. Under the action of $\Z_2=\langle\vec{j}\rangle$ we find that:
\[
B^{\text{BD}_{4n}}=\left( B^{\Z_{2n}}\right)^{\vec{j}}=k[\widehat{x^{2n}+y^{2n}},\widehat{xy}]^{\Z_2}=k[\widehat{x^{2n}+y^{2n}},\widehat{(xy)^2}]
\]
By {\it Lemma\,\ref{images}}, $\varphi_{2n}(T_{2(n-1)})=\zeta (xy)^2$ for some nonzero $\zeta\in\N$, which implies $(xy)^2\in k[\widehat{x^{2n}+y^{2n}}]$. 
This proves part (d).

Part (e). Let $(X_i)$ be coordinates on $\V_6$ and define the equivariant morphism $\psi :k[\V_6] \to B$ by $\psi (X_6)=zw(z^4-w^4)$, noting that the image of $\psi$ is $k[\widehat{v_3}]$. 
The reader can check that
\[
\psi (T_2)=\psi (2X_0X_2-X_1^2)=-4v_1 \quad \text{and}\quad \psi (G)=\psi (3X_0^2X_3-3X_0X_1X_2+X_1^3)=8v_2\, .
\]
Therefore, $v_1,v_2\in k[\widehat{v_3}]$ implies $\widehat{v_1},\widehat{v_2}\subset k[\widehat{v_3}]$ and $B^{{\rm BT}_{24}}=k[\widehat{v_3}]$.
This proves part (d).

Part (f). Let $(X_i)$ be coordinates on $\V_8$ and define the equivariant morphism $\rho :k[\V_8] \to B$ by $\rho (X_8)=z^8+14z^4w^4+w^8$, noting that the image of $\rho$ is $k[\widehat{v_1}]$. 
The reader can check that
\[
\rho (T_2)=\rho (2X_0X_2-X_1^2)=108v_3^2 \quad \text{and}\quad \rho (G)=\rho (3X_0^2X_3-3X_0X_1X_2+X_1^3)=108v_2v_3\, .
\]
Therefore, $v_3^2,v_2v_3\in k[\widehat{v_1}]$ implies $\widehat{v_2^2},\widehat{v_2v_3}\subset k[\widehat{v_1}]$ and $B^{{\rm BO}_{48}}=k[\widehat{v_1}]$.
This proves part (e).

Part (g). 
The reader can check that
\[
[\upsilon (w_2),\upsilon (w_1)]_1^{\delta}=20w_3
\quad \text{and} \quad 
[\upsilon^2(w_1),\upsilon^2(w_1)]_2^{\delta}=-484w_2
\]
Therefore, $w_2,w_3\in k[\widehat{w_1}]$ implies $\widehat{w_2},\widehat{w_3}\subset k[\widehat{w_1}]$ and $B^{{\rm BI}_{120}}=k[\widehat{w_1}]$.

Let $H={\rm BI}_{120}$. Since $B^*=k^*$, we have $(B^H)^*=k^*$. Let $\sigma\in {\rm Hom}(H,k^*)$ be given. 
Since $k^*$ is abelian, $\sigma$ factors through the quotient $H/[H,H]$ where $[H,H]$ is the commutator subgroup. It is well-known that $H$ is a perfect group, that is, $H=[H,H]$. 
Therefore, $\sigma$ is trivial. By {\it Lemma\,\ref{Samuel}} below, $B^H$ is a UFD. This proves part (f). 
\end{proof}

\begin{lemma}\label{Samuel} {\rm (\cite{Samuel.68}, Section 5, Part (3))} Let $R$ be a UFD over a field $K$ and let the finite group $G$ act on $R$ by $K$-algebra automorphisms. 
If $R^*=K^*$ and ${\rm Hom}(G,K^*)$ is trivial then $R^G$ is a UFD.
\end{lemma}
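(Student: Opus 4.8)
The plan is to show that $S := R^G$ is a Krull domain in which every height-one prime is principal; this characterization of UFDs holds with no Noetherian hypothesis, so it suffices. To begin I would recall the standard facts that a UFD is a Krull domain, that $R$ is integral over $S$ (each $r\in R$ is a root of $\prod_{g\in G}(T-g(r))\in S[T]$), and that $S=R\cap\mathrm{Frac}(R)^G$ is an intersection of a Krull domain with a subfield of its fraction field, hence is itself a Krull domain. It then remains to fix a height-one prime $\mathfrak q$ of $S$ and produce a generator.

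Next I would invoke the classical prime-ideal theory of the integral extension $S\subseteq R$: lying-over holds, $G$ acts transitively on the finitely many primes of $R$ over $\mathfrak q$, and — since $S$ is normal, by going-down for normal domains together with incomparability — every prime of $R$ over $\mathfrak q$ has height one. Choose one, say $\mathfrak P$, let $H\le G$ be its stabilizer, and, using that $R$ is a UFD, write $\mathfrak P=(t)$ with $t$ prime. Set $u=\prod_{gH\in G/H}g(t)$, the product over a set of coset representatives. Distinct cosets give non-associate prime elements $g(t)$, and the ideals $g(\mathfrak P)$ are exactly the primes of $R$ over $\mathfrak q$ (note $g(\mathfrak P)\cap S=g(\mathfrak q)=\mathfrak q$ since $g$ fixes $S$ pointwise); hence $\mathrm{div}_R(u)$ is the sum of those primes, each with multiplicity one.

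The heart of the proof is to show $u\in S$. An element $h_0\in H$ fixes $\mathfrak P=(t)$, so $h_0(t)=\epsilon_{h_0}t$ with $\epsilon_{h_0}\in R^*=K^*$; an arbitrary $h\in G$ merely permutes the cosets $G/H$, so bookkeeping the changes of representatives gives $h(u)=c_h\,u$ for some $c_h\in K^*$. Since $G$ acts by $K$-algebra automorphisms, it fixes $K^*$ pointwise, and one then checks that $h\mapsto c_h$ is a group homomorphism $G\to K^*$; by the hypothesis that $\mathrm{Hom}(G,K^*)$ is trivial, $c_h\equiv 1$, that is, $u\in R^G=S$. Finally I would verify $uS=\mathfrak q$. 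Clearly $u\in\mathfrak P\cap S=\mathfrak q$. If a height-one prime $\mathfrak q'\ne\mathfrak q$ of $S$ contained $u$, choosing $\mathfrak P'$ over $\mathfrak q'$ would force $\mathfrak P'$ to contain some $g(t)$, hence $\mathfrak P'=g(\mathfrak P)$ and $\mathfrak q'=\mathfrak P'\cap S=\mathfrak q$, a contradiction; and comparing valuations via $S_{\mathfrak q}\subseteq R_{\mathfrak P}$ gives $1=v_{\mathfrak P}(u)=e(\mathfrak P/\mathfrak q)\,v_{\mathfrak q}(u)$, so $v_{\mathfrak q}(u)=1$. Thus $\mathrm{div}_S(u)=\mathfrak q$, and since $uS$ and $\mathfrak q$ are both divisorial ideals of $S$ with the same divisor, $uS=\mathfrak q$; as $\mathfrak q$ was arbitrary, $S$ is a UFD.

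The step I expect to be the main obstacle is showing $u\in S$: a priori $h\mapsto c_h$ is only a $1$-cocycle, representing a class in $H^1(G,R^*)$, and it is exactly the hypotheses that $R$ is a $K$-algebra with $R^*=K^*$ — forcing the $G$-action on units to be trivial — that turn it into an ordinary homomorphism, which $\mathrm{Hom}(G,K^*)=0$ then kills. A secondary technical point is that $u$ must generate the prime $\mathfrak q$ rather than only a $\mathfrak q$-primary ideal; this is the unramifiedness $e(\mathfrak P/\mathfrak q)=1$, which falls out of the valuation computation above.
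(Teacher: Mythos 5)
Your argument is correct and complete, and it is precisely the classical argument behind the cited result: the paper itself gives no proof of this lemma, quoting it from Samuel, and Samuel's proof proceeds exactly as you do, by descending height-one primes of the Krull domain $R^G$ to the norm-type product $u=\prod g(t)$, with the obstruction to $G$-invariance of $u$ being a $1$-cocycle in $H^1(G,R^*)$ that the hypotheses $R^*=K^*$ (with trivial $G$-action on $K$) and ${\rm Hom}(G,K^*)=1$ kill. Your treatment of the two delicate points — turning the cocycle into a homomorphism, and the unramifiedness computation $1=e(\mathfrak P/\mathfrak q)\,v_{\mathfrak q}(u)$ ensuring $uS=\mathfrak q$ rather than a $\mathfrak q$-primary ideal — is sound, so nothing further is needed.
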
 

\begin{remark} {\rm The proof for part (c) of {\it Theorem\,\ref{threefolds}} is rather {\it ad hoc}. The key is to find a preimage of $xy$. The cubic polynomial}
\[
\big[ U^{2(n-1)}T_{n+1}\, ,\, X_{2(n-1)}\big]_{2(n-1)}^{D_n}
\]
{\rm is of degree two, so its image is $c_n(xy)$ for $c_n\in k$. If $c_n\ne 0$ for all odd $n$, this gives a simpler proof. 
We checked that $c_3\ne 0$ and $c_5\ne 0$, but could not show $c_n\ne 0$ for all odd $n$.}
\end{remark}

\subsection{Minimal presentations} 
{\it Theorem\,\ref{threefolds}} yields the following presentations. 
Let $n\ge 2$. Coordinates on $\V_n$ are $(X_i)$, $0\le i\le n$, 
and coordinates on $\V_n\oplus \V_2$ are $(X_i,Y_j)$, $0\le i\le n$, $0\le j\le 2$. 
\begin{enumerate}[label=(\roman*)]
\item For odd $n\ge 3$: $\gamma_n:k[\V_n]\to B^{\Z_n}$ where $\gamma_n(X_n)=z^n+w^n$
\smallskip
\item For $n\equiv 0\pmod 4$ and for $n=2$:
\[
\alpha_n:k[\V_n\oplus \V_2]\to B^{\Z_n} \,\, ,\,\, \alpha_n(X_n)=z^n+w^n \,\, ,\,\,  \alpha_n(Y_2)=zw
\]
\item For $n\equiv 2\pmod 4$ with $n=2m\ge 6$:
\[
\beta_n:k[\V_n]\to B^{\Z_n}\,\, ,\,\, \beta_n(X_n)=z^{2m}+(zw)^m+w^{2m}
\]
\item $\delta_n :k[\V_{2n}]\to B^{{\rm BD}_{4n}}$ where $\delta_n(X_{2n})=z^{2n}+w^{2n}$
\smallskip
\item $\epsilon :k[\V_6]\to B^{{\rm BT}_{24}}$ where $\epsilon (X_6)=zw(z^4-w^4)$
\smallskip
\item $\eta :k[\V_8]\to B^{{\rm BO}_{48}}$ where $\eta (X_8)=z^8+14z^4w^4+w^8$
\smallskip
\item $\kappa :k[\V_{12}]\to B^{{\rm BI}_{120}}$ where $\kappa (X_{12})=zw(z^{10}+11z^5w^5-w^{10})$
\end{enumerate}

\begin{example}\label{V2+V2} {\rm $H=\Z_2=\{ \pm I\}$ is the center of $SL_2(k)$ and $B^H=k[\widehat{x^2+y^2},\widehat{xy}]$ has six generators. 
Coordinates for $\V_2\oplus \V_2$ are $((X_0,X_1,X_2), (Y_0,Y_1,Y_2))$. 
The equivariant surjection $\alpha_2 :k[\V_2\oplus \V_2]\to B^H$ is defined by $\alpha_2(X_2)=z^2+w^2$ and $\alpha_2(Y_2)=zw$. 
We have $k[\V_2\oplus \V_2]^{SL_2(k)}=k[f_1,f_2,f_3]\cong k^{[3]}$ where:
\[
f_1=2X_0X_2-X_1^2 \, ,\,\, f_2=X_0Y_2-X_1Y_1+X_2Y_0\, ,\,\, f_3=2Y_0Y_2-Y_1^2
\]
See \cite{Freudenburg.17}, Section 8.7. We find that $\alpha_2(f_1)=4$, $\psi_2(f_2)=0$ and $\alpha_2(f_3)=-1$. Therefore $\krn\alpha_2= (f_1-4, f_2,f_3+1)$ and:
\[
B^{\Z_2}=k[z^2+w^2,xz+yw,x^2+y^2,zw, xw+yz,xy]\cong k[\V_2\oplus \V_2]/(f_1-4,f_2,f_3+1)
\]
This is a fiber of the quotient for $\V_2\oplus\V_2$ and gives the coordinate ring for $PSL_2(k)=SL_2(k)/\Z_2$. 
}
\end{example}

\begin{example}\label{n=3} {\rm $H=\Z_3$ and $B^H=k[\widehat{x^3+y^3}]$ has four generators. 
Coordinates for $\V_3$ are given by $(X_0,X_1,X_2,X_3)$ and the equivariant surjection $\beta_3:k[\V_3]\to B^H$ is defined by $\beta_3(X_3)=z^3+w^3$. 
We have $k[\V_3]^{SL_2(k)}=k[h]\cong k^{[1]}$ where $h$ is the discriminant: 
\[
h=8X_0X_2^3-3X_1^2X_2^2+9X_0^2X_3^2-18X_0X_1X_2X_3+6X_1^3X_3
\]
See \cite{Freudenburg.17}, Section 8.7. We find that $\beta_3(h)=324$ and $\krn\beta_3=(h-324)$. Since $h$ is homogeneous of degree 4, it follows that: 
\[
B^{\Z_3}=k[z^3+w^3, xz^2+yw^2, x^2z+y^2w, x^3+y^3]\cong k[\V_3]/(h-1)
\]
In particular, the homogeneous space $SL_2(k)/\Z_3$ is a hypersurface which is a fiber of the quotient for $\V_3$. }
\end{example}

\begin{example} {\rm $H={\rm BD}_8=\langle \vec{i},\vec{j},\vec{k}\rangle$ and $B^H=k[\widehat{x^4+y^4}]$ has five generators. 
Coordinates for $\V_4$ are given by $(X_0,X_1,X_2,X_3,X_4)$ and the equivariant surjection $\gamma_1:k[\V_4]\to B^H$ is defined by $\gamma_1(X_4)=z^4+w^4$. 
We have $k[\V_4]^{SL_2(k)}=k[q,r]\cong k^{[2]}$ where $q=T_4=2X_0X_4-2X_1X_3+X_2^2$ and:
\[
r=12X_0X_2X_4-6X_1^2X_4-9X_0X_3^2+6X_1X_2X_3-2X_2^3
\]
See \cite{Freudenburg.17}, Section 8.7. We find that $\gamma_1(q)=48$ and $\gamma_1(r)=0$. Since $q$ is homogeneous of degree 2, it follows that:
\[
B^{{\rm BD}_8}\cong k[\V_4]/(q-1,r)
\]
which is a fiber of the quotient for $\V_4$. }
\end{example}

 \begin{theorem}\label{strong} Each of the presentations {\rm (i)-(vii)} above is minimal.
\end{theorem}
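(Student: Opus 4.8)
The plan is to show that each $SL_2$-module $W$ appearing on the left-hand side of presentations (i)--(vii) has the least possible dimension among all $SL_2$-modules admitting an equivariant surjection onto the coordinate ring in question. Since $\mathcal{O}(SL_2/H)=B^H$ is generated by the elements listed in \emph{Theorem\,\ref{threefolds}}, and each such generator lies in a single $\widehat{f}$ whose span is an irreducible $SL_2$-submodule of the stated dimension, \emph{Lemma\,\ref{presentation}} already provides the presentations; what remains is the lower bound. The key observation is that if $\varphi:k[W]\to B^H$ is \emph{any} equivariant surjection, then $W$ must surject, as an $SL_2$-module, onto the degree-$1$ graded piece of $B^H$ under the grading induced by the semisimple derivation $E=[\delta,\upsilon]$ — more precisely, onto the cotangent space at a suitable point, or better, onto the minimal-degree generating subspace. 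So the first step is: for each $H$, compute the isotypic decomposition of the lowest-degree (or the minimal linear) generating $SL_2$-subspace of $B^H$, and observe that this subspace is \emph{itself} irreducible of dimension $d_H+1$ with $d_H\in\{n,2n,6,8,12\}$ as appropriate. Any $W$ surjecting onto $B^H$ must contain a submodule isomorphic to this irreducible piece, hence $\dim W\ge d_H+1$.

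The second step handles the cases (ii) (namely $n\equiv 0\bmod 4$ and $n=2$) where the module is \emph{reducible}, $W=\V_n\oplus\V_2$. Here one must show that no irreducible $SL_2$-module, and no module of dimension $\le n+2$ other than $\V_n\oplus\V_2$ (up to the constraint), can surject onto $B^{\Z_n}$. The point is that $B^{\Z_n}=k[\widehat{x^n+y^n},\widehat{xy}]$ is \emph{not} generated by any single irreducible piece: $\widehat{xy}\cong\V_2$ is needed because $xy\notin k[\widehat{x^n+y^n}]$ (this is exactly where part (c) of \emph{Theorem\,\ref{threefolds}}, which fails for $n\equiv 0\bmod 4$, is used as a contrast — I would invoke the explicit weight/degree count showing $(xy)$ is not a polynomial in the components of $\widehat{x^n+y^n}$ when $n\equiv 0\bmod 4$). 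Thus the image of $\varphi$ restricted to any $\V_n$-isotypic summand of $W$ misses $xy$, forcing a second isotypic component. A $\V_2$ is the smallest module whose image can contain a nonconstant element of $A_2=k\cdot xy$ while the $\V_n$ already accounts for $x^n+y^n\in A_n$; a Hilbert-series or transcendence-degree argument (as in the proof of part (c), using \emph{Lemma\,\ref{alg-independent}}) rules out replacing $\V_2$ by anything smaller or by folding it into a single larger irreducible of dimension $\le n+2$.

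The third step is the genuine content for the sporadic and binary-dihedral cases (iv)--(vii): one must verify that the relevant invariant ring cannot be generated by the $\widehat{f}$ of a \emph{lower-degree} invariant $f\in k[x,y]^H$. Equivalently, the minimal degree of a homogeneous generator of $k[x,y]^H$ equals $d_H$, and there is no ``unexpected'' generator of $B^H$ in lower $E$-degree. This follows from Klein's explicit description: the lowest-degree binary invariant for $\mathrm{BD}_{4n}$, $\mathrm{BT}_{24}$, $\mathrm{BO}_{48}$, $\mathrm{BI}_{120}$ has degree exactly $2n$, $6$, $8$, $12$ respectively (degrees $2n, 6, 8, 12$ of $r_{2n}, v_3, v_1, w_1$), and since $A_0=B^{SL_2}=k$ (as $SL_2/H$ is affine homogeneous of dimension $3$, its $SL_2$-invariants are constants), the minimal positive-degree piece of $A=B^H\cap k[x,y]$ sits in degree $d_H$ and spans, together with $U$-iterates, an irreducible $\V_{d_H}$. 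Hence $\dim W\ge d_H+1$, matching the presentation.

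**Main obstacle.** The hardest part is the reducible case (ii): ruling out \emph{all} alternative $SL_2$-modules of dimension $\le n+2$, not merely the ``obvious'' competitors. One must argue that a surjection from a single irreducible $\V_m$ with $m\le n+1$ is impossible (the target has transcendence degree $3$ but the image of a single $\V_m$ sits inside one $k[\widehat{f}]$ which, for $B^{\Z_n}$, has transcendence degree at most... — here the dimension/transcendence bookkeeping from \emph{Lemma\,\ref{alg-independent}} and the proof of \emph{Theorem\,\ref{threefolds}}(c) must be re-deployed carefully), and that mixed modules $\V_a\oplus\V_b$ with $a+b<n$ also fail because neither summand can carry an element of $A_n$. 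I expect this to require a short but careful case analysis on which isotypic components can map onto the generators $x^n+y^n\in A_n$ and $xy\in A_2$ simultaneously; the parity condition $n\equiv 0\bmod 4$ (versus $n\equiv 2\bmod 4$, handled by (iii)) is what makes $\V_2$ unavoidable and must be used essentially.
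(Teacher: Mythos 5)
Your overall strategy (bound $\dim W$ from below using the $\N$-graded kernel $A=\krn\delta$ of the target, then treat the reducible case (ii) separately) is in the right spirit, but the load-bearing claim of your first and third steps is false in most cases. You assert that the lowest-degree generating $SL_2$-subspace of $B^H$ is an irreducible module of dimension $d_H+1$ with $d_H\in\{n,2n,6,8,12\}$. For the cyclic groups the binary invariant $xy$ has degree $2$, and for ${\rm BD}_{4n}$ the invariant $(xy)^2$ has degree $4<2n$ when $n\ge 3$: Klein's generator degrees are $(n,n,2)$ and $(2n,2n+2,4)$, not concentrated in degree $d_H$. So your lower bound as stated would only give $\dim W\ge 3$ (resp.\ $\ge 5$) in cases (i)--(iv), far short of $n+1$ (resp.\ $2n+1$). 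Moreover, the assertion that any presenting module must \emph{contain} the ``minimal generating module'' is not justified for a non-graded algebra; the legitimate version, which the paper uses, is that under an equivariant surjection the highest-weight vector of each summand $\V_{d_i}$ lands in $A_{d_i}$, so a summand with $A_{d_i}=0$ maps to zero and can be deleted. On top of this the paper needs two mechanisms you do not supply: for odd $n$, a parity argument (some $d_i$ must be odd because $A_n\ne 0$ with $n$ odd, and odd degrees below $n$ vanish in $A$, forcing $d_i\ge n$); for even $n$ and for ${\rm BD}_{4n}$, the observation that a summand with $d_i<n$ has $A_{d_i}=k(xy)^{m_i}$ and hence maps into the proper subalgebra $k[\widehat{xy}]$, so generation forces some $d_i\ge n$ (resp.\ $\ge 2n$). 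Only for ${\rm BT}_{24},{\rm BO}_{48},{\rm BI}_{120}$ is your ``lowest invariant degree equals $d_H$'' claim true, and there the argument closes immediately, as in the paper.

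Second, the step you yourself flag as the main obstacle --- in case (ii), ruling out a single irreducible $\V_N$ with $N=n$ or $N=n+2$ (dimensions $n+1$ and $n+3$, both below $n+4$) --- is genuinely unresolved in your proposal. A transcendence-degree or Hilbert-series count cannot do it, because $k[\widehat{f}]$ for suitable $f\in A_n$ or $A_{n+2}$ does have dimension $3$; the issue is not size but the absence of $xy$. The paper's resolution is short but essential: writing $n=2m$ with $m$ even, a candidate generator $f$ lies in $\bigl(B^{\Z_{2m}}\bigr)^{\vec{j}}=B^{{\rm BD}_{4m}}$, where $\vec{j}$ acts by right multiplication, and since $\vec{j}(xy)=-xy$ one concludes $xy\notin k[\widehat{f}]$, a contradiction; separate dimension counts then dispose of $|I|\ge 3$ and force $W=\V_n\oplus\V_2$ when $|I|=2$. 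Your instinct that the condition $n\equiv 0\pmod 4$ must be used essentially is correct --- it is exactly what makes $(xy)^m$ invariant under $\vec{j}$ --- but without this (or an equivalent) argument the minimality of $\alpha_n$, and hence the theorem in case (ii), is not proved.
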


\begin{proof} In each case, assume that $\varphi :k[W]\to B^H$ is a minimal presentation.
Write $W=\bigoplus_{i\in I}\V_{d_i}$. If $A_{d_i}=\{ 0\}$ for some $i\in I$, then 
$\varphi (\V_{d_i})=\{ 0\}$ by equivariance. But then $W/\V_{d_i}$ would give a presentation of smaller dimension, a contradiction. Therefore, 
$A_{d_i}\ne\{ 0\}$ for each $i\in I$.  

Case(i). $n\ge 3$ is odd and 
$A=k[x^n,xy,y^n]$ so $A_d=\{ 0\}$ for odd $d<n$ and $A_n\ne\{ 0\}$. Therefore, $d_i$ is odd for at least one $i\in I$ (otherwise $A_d=\{ 0\}$ for every odd $d$). If $d_i<n$ for odd $d_i$ then $A_{d_i}=\{ 0\}$, which gives a contradiction. 
So we must have $d_i=n$, which gives $W=\V_n$ and $\gamma_n$ is minimal. 

Case (ii). We first consider the case $n=2$. Suppose that $3\le \dim W\le 5$. Since $n$ is even, the  summands of $W$ are among $\V_0,\V_2,\V_4$. 
If $\V_4$ is a submodule of $W$ then $W=\V_4$, which gives a contradiction, 
since $A_2=\{ 0\}$ in this case. So $W=a\V_0\oplus\V_2$ for $0\le a\le 2$. 
But then $A_2$ is a one-dimensional vector space over $k$, which gives a contradiction, 
since $A_2=kx^2\oplus kxy\oplus ky^2$. 
So $\dim W=6$ and $\alpha_2$ is minimal. 

Consider the case $n=2m$ for even $m\ge 2$. Then $d_i$ is even for each $i\in I$. Suppose that $i\in I$ is such that $d_i=2m_i<n$. Let $(X_j)$ be coordinates for $\V_{di}$. Then $\varphi (X_0)\in A_{d_i}=k(xy)^{m_i}$, which implies $\varphi (X_j)\in k[\widehat{xy}]$ for $0\le j\le d_i$. 
Therefore:
\begin{equation}\label{less-than}
d_i<n \implies \varphi (\V_{d_i})\subset k[\widehat{xy}]
\end{equation}
Since elements of $\V_{d_i}$ for $i\in I$ generate $k[W]$ as a $k$-algebra, it follows that $d_i\ge n$ for at least one $i$. 

Suppose that $|I|\ge 3$. In this case, we have
\[
\sum_{i\in I}d_i+3\le \sum_{i\in I}d_i+|I|=\dim_kW\le\dim_k(\V_n\oplus\V_2)=n+4
\]
which implies $d_i<n$ for each $i\in I$, a contradiction. Therefore, 
$|I|\le 2$.

Suppose that $|I|=1$. Then $W=\V_N$ for some even integer $N$. 
From line (\ref{less-than}) we have $N\ge n$ and, since $\dim_kW\le \dim_k(\V_n\oplus\V_2)$, we also have $N\le n+2$. So either $N=n$ or $N=n+2$.

If $N=n$, then there exists 
$f\in A_n=kx^{2m}+k(xy)^m+ky^{2m}$ such that $B^{\Z_n}=k[\widehat{f}]$.
If $N=n+2$ then there exists 
$f\in A_{n+2}=(xy)^2A_n$ such that $B^{\Z_n}=k[\widehat{f}]$.
In either case, since $m$ is even, 
\[
f\in \left( B^{\Z_{2m}}\right)^{\vec{j}}=B^{\rm BD_{4m}} 
\quad\text{where}\quad 
\vec{j}=\begin{pmatrix} 0&1\cr -1 & 0\end{pmatrix}
\]
acts by right multiplication and $\vec{j}(xy)=-xy$. But then $xy\notin k[\widehat{f}]$ gives a contradiction. 

Therefore, $|I|=2$. In this case, $d_1+d_2+2\le n+4$ implies $d_1+d_2\le n+2$. Assuming $d_1\ge d_2$, we must have $d_1\ge n$. Therefore, $d_1=n$ and $d_2=2$, and we see that $W=\V_n\oplus \V_2$ and $\alpha_n$ is minimal. 

Case (iii). Arguing as in case (ii), we find that $d_i\ge n$ for at least one $i$. But then $W=\V_n$ and $\beta_n$ is minimal. 

Case (iv). 
Arguing as in case (ii), we find that $d_i\ge 2n$ for at least one $i$. But then 
$W=\V_{2n}$ and $\delta_n$ is minimal. 

Case (v). We have $A=\C [v_1,v_2,v_3]$ in degrees 8, 12 and 6. 
If $d_i<6$ for some $i\in I$, then $A_{d_i}=\{ 0\}$, which gives a contradiction. 
Therefore, $W=\V_6$ and $\epsilon$ is minimal. 

Case (vi). We have $A=\C [v_1,v_2v_3,v_3^2]$ in degrees 8, 18 and 12. 
If $d_i<8$ for some $i\in I$, then $A_{d_i}=\{ 0\}$, which gives a contradiction. 
Therefore, $W=\V_8$ and $\eta$ is minimal. 

Case (vii). We have $A=\C [w_1,w_2,w_3]$ in degrees 12, 20 and 30. 
If $d_i<12$ for some $i\in I$, then $A_{d_i}=\{ 0\}$, which gives a contradiction. 
Therefore, $W=\V_{12}$ and $\kappa$ is minimal. 
\end{proof}


\section{Absolutely nonextendable $SL_2$-action on $X$}\label{SL2-nonextend}

The purpose of this section is to give a proof of {\it Theorem\,\ref{non-extend1}}.  Let $k=\C$ and let 
$\mathcal{Q}\subset k[\V_2]=k[X_0,X_1,X_2]$ be the vector space of ternary quadratic forms. Then $\dim\mathcal{Q}=6$ and by
{\it Corollary\,\ref{quadratics}} we have:
\[
k[\mathcal{Q}]=k[\widehat{T}_0,\widehat{T}_2] \quad\text{where}\,\, T_0=X_0^2 \,\, ,\,\, T_2=2X_0X_2-X_2^2
\]
The fundamental pair $(D_2,U_2)$ restricts to $k[\mathcal{Q}]$. 
Let $R'=k[\V_2]/(T_2-1)=k[x_0,x_1,x_2]$. Define $\pi :k[\V_2]\to R'$, $\pi (X_i)=x_i$, and set 
$R=\pi (k[\mathcal{Q}])$. We have $Y\cong{\rm Spec}(R')$ and $X\cong {\rm Spec}(R)$. 
Since $\deg x_0^2=4$ we obtain the equivariant surjection 
\begin{equation}\label{phi-presentation}
\varphi :k[\V_4]\to R=k[\widehat{x_0^2}]
\end{equation}
defined by $\varphi (Y_4)=x_2^2$, where $k[\V_4]=k[Y_0,Y_1,Y_2,Y_3,Y_4]$. 

Let $(\delta ,\upsilon )$ be the induced fundamental pair on $R$, let $A=\krn\delta$, and let
$A=\bigoplus_{d\in\N}A_d$ be the $\N$-grading defined by $(\delta ,\upsilon)$.
By equivariance we have:
\[
\varphi (Y_0)\in A_4=kx_0^2\,\, ,\,\, \varphi (T_2(Y_i))\in A_4=kx_0^2\,\, ,\,\, \varphi (T_4(Y_i))\in A_0=k
 \]
 We find by direct computation that $\varphi (Y_0)=8x_0^2$, $\varphi (T_2(Y_i))=16 x_0^2$ and $\varphi (T_4(Y_i))=4$.
 Therefore, $2Y_0-T_2(Y_i), T_4(Y_i)-4\in\krn\varphi$. It is easy to check that
 \begin{equation}\label{kernel}
 \krn\varphi = (\widehat{2Y_0-T_2(Y_i)}, T_4(Y_i))
 \end{equation}
 where $T_2=2Y_0Y_2-Y_1^2$ and $T_4=2Y_0Y_4-2Y_1Y_3+Y_0^2$. 
It follows that $\krn\varphi$ contains
 \begin{eqnarray*}
\textstyle \frac{1}{2}U_4^2(2Y_0-T_2(Y_i)) -T_4(Y_i)&=& 2Y_2-(2Y_0Y_4+Y_1Y_3-Y_2^2)-(2Y_0Y_4-2Y_1Y_3+Y_2^2)\\
 &=& 2Y_2-4Y_0Y_4+Y_1Y_3+4
 \end{eqnarray*}
 which is a variable of $k[\V_4]\cong k^{[5]}$. Therefore, $X$ admits a closed algebraic embedding in $\C^4$. 
 
 Since $\krn\delta \cap\krn\upsilon = k$ we see that $ML(R)=k$.  
 If $X$ admits a closed algebraic embedding in $\C^3$, then {\it Theorem\,\ref{Band-ML}} implies that $X$ is a hypersurface of the form
 $xy=P(z)$ for some polynomial $P$. However, it is shown in \cite{Freudenburg.24}, Proposition 5.3, that $X$ is not isomorphic to such a surface. 
 Therefore, the embedding dimension of $X$ equals 4. This proves part (b) of {\it Theorem\,\ref{non-extend1}}. 
 
 Part (c) is implied by the following result.
 
\begin{lemma}\label{V4} 
Let $B$ be an affine $k$-domain, where $k=\bar{k}$ and $\dim B\ge 1$. Assume that each of $B$ and $k^{[4]}$ is endowed with  
an algebraic action of $SL_2(k)$. The following conditions are mutually exclusive.
\begin{enumerate}
\item There exists an equivariant surjection $\varphi :k[\V_4]\to B$. 
\item There exists an equivariant surjection $\pi :k^{[4]}\to B$. 
\end{enumerate}
\end{lemma}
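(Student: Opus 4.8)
The plan is to derive from condition (1) a restriction on the $SL_2$-module structure of $B$ that no $4$-dimensional $SL_2$-module is compatible with, and then to observe that condition (2) forces exactly such a $4$-dimensional module to generate $B$.

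First I would exploit condition (1). An equivariant surjection $\varphi\colon k[\V_4]\to B$ exhibits $B$ as a quotient $SL_2$-module of $k[\V_4]={\rm Sym}(\V_4)=\bigoplus_{d\ge 0}{\rm Sym}^d\V_4$, and since rational $SL_2$-modules are semisimple, $B$ is then isomorphic to a submodule of ${\rm Sym}(\V_4)$; in particular every irreducible constituent of $B$ is a constituent of some ${\rm Sym}^d\V_4$. By the classical description of the covariants of the binary quartic — the covariant algebra being generated by the form, its Hessian, one covariant of order $6$, and the two basic invariants — the orders of covariants form the submonoid of $\N$ generated by $4$ and $6$; equivalently, ${\rm Sym}^d\V_4$ involves only the irreducibles $\V_m$ with $m=0$ or $m\ge 4$ even. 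Hence the same holds for $B$. Moreover $B\ne k$: $\varphi(\V_4)$ is a quotient of the nontrivial irreducible $\V_4$ and cannot be $0$ (that would kill the augmentation ideal, forcing $B=k$, contrary to $\dim B\ge 1$), so $B$ contains a copy of $\V_4$ and $SL_2$ acts nontrivially on $B$.

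Now I would use condition (2). The first step is to arrange that the $SL_2$-action on $k^{[4]}$ is linear, i.e. to produce a $4$-dimensional $SL_2$-module $W$ with an equivariant isomorphism $k^{[4]}\cong k[W]$; this is the step I expect to be the main obstacle, since it rests on linearizability of reductive group actions on four-dimensional affine space. Granting it, condition (2) becomes an equivariant surjection $\pi\colon k[W]\to B$, so $B$ is generated as a $k$-algebra by the submodule $\pi(W)\subseteq B$, which is a homomorphic image of $W$ and hence has dimension $\le 4$. Its irreducible summands are constituents of $B$, so each is some $\V_m$ with $m=0$ or $m\ge 4$ even; but among the irreducibles of dimension $\le 4$ — namely $\V_0,\V_1,\V_2,\V_3$ — only $\V_0$ qualifies. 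Thus $\pi(W)$ is a trivial $SL_2$-module, so $SL_2$ acts trivially on the $k$-algebra it generates, namely $B$ — contradicting the fact that $B$ contains $\V_4$. Hence (1) and (2) cannot both hold.

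The delicate point, as noted, is the passage to a linear action on $\A^4$. Without a linearization theorem one can only say that $B$ is generated by the (finite-dimensional) $SL_2$-submodule of $k^{[4]}$ spanned by the $SL_2$-orbit of the four coordinate functions, and this submodule may have dimension greater than $4$; recovering the bound $\dim_k\pi(W)\le 4$ would then require an additional argument — for instance exhibiting an $SL_2$-fixed point of $\A^4$ and comparing the two closed embeddings of ${\rm Spec}\,B$ through the tangent representation there. A minor secondary point is the invariant-theoretic input of the first step (absence of covariants of order $2$ for the binary quartic), which can instead be confirmed by a direct weight-multiplicity computation in ${\rm Sym}^d\V_4$.
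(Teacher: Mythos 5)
Your argument is essentially the paper's, recast in module language. The constraint you extract from condition (1) --- that the only irreducible constituents of $B$ are $\V_0$ and $\V_m$ with $m\ge 4$ even --- is exactly the paper's statement $(\dag)$, namely that the kernel of the basic locally nilpotent derivation on $k[\V_4]$ has no nonzero homogeneous elements of odd degree or of degree $2$ (no covariants of the binary quartic of order $2$ or of odd order), and your endgame is a slightly slicker dimension count (a nontrivial admissible irreducible has dimension $\ge 5$, so a generating module of dimension $\le 4$ must be trivial) in place of the paper's explicit check of the five $4$-dimensional $SL_2$-modules $\V_3$, $2\V_1$, $\V_0\oplus\V_2$, $2\V_0\oplus\V_1$, $4\V_0$. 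The one step you leave open --- linearizing the given $SL_2$-action on $k^{[4]}$ --- is precisely what the paper supplies, by citing Panyushev's theorem that every algebraic action of a semisimple group, in particular $SL_2(k)$, on $\A_k^4$ is in suitable coordinates induced by a $4$-dimensional module. Two cautions: the principle you appeal to, ``linearizability of reductive group actions on four-dimensional affine space,'' is false as stated --- the introduction of this very paper recalls Masuda--Petrie's nonlinearizable $O_2(\C)$-actions on $\C^4$ --- so it is essential that the group here is $SL_2$ (semisimple), where Panyushev's result applies; and your proposed fallback (an $SL_2$-fixed point of $\A^4$ and a comparison of tangent representations) is not developed enough to substitute, since neither the existence of a fixed point for an a priori nonlinear action nor the passage from a tangent representation to a bound on the generating submodule is justified. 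With Panyushev's theorem cited at the flagged spot, your proof is correct and coincides in substance with the paper's.
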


\begin{proof} 
Let $(D_4,U_4)$ be the basic fundamental pair on $k[\V_4]$, and let $(D,U)$ be the given fundamental pair on $B$. 
Let $\mathcal{A}=\krn D_4$ and $A=\krn D$
with induced $\N$-gradings $\mathcal{A}=\bigoplus_{d\in\N}\mathcal{A}_d$ and $A=\bigoplus_{d\in\N}A_d$.
Then $\mathcal{A}_d=0$ for odd $d$ and for $d=2$; see \cite{Freudenburg.17}, Example 8.12. 

Assume that condition (1) holds.
Since $A_d=\varphi (\mathcal{A}_d)$ for all $d\in\N$, it follows that:
\[
(\dag ) \hspace{.1in} A_d=0 \,\,\text{\it for odd $d$ and for}\,\, d=2
\]

Assume that condition (2) holds. Let $(\tilde{D},\tilde{U})$ be the given fundamental pair for $k^{[4]}$ with $\tilde{A}=\krn\tilde{D}$. 
According to Panyushev \cite{Panyushev.84}, every algebraic $SL_2$-action on $\A_k^4$ is equal (in some coordinate system) 
to the action induced by an $SL_2$-module $W$. There are five $SL_2$-modules of dimension four, namely:
\[
\V_3\,\, ,\,\, 2\V_1\,\, ,\,\, \V_0\oplus \V_2\,\, ,\,\, 2\V_0\oplus \V_1\,\, ,\,\, 4\V_0
\]
We consider each of these cases, using the fact that $A=\pi (\tilde{A})$. 
\begin{enumerate}[label=(\roman*)]
\item If $W=\V_3$, then $\tilde{A}$ is generated in degrees 2 and 3. Therefore, either $A_2\ne 0$ or $A_3\ne 0$.
\item If $W=2\V_1$ or $W=2\V_0\oplus\V_1$, then $\tilde{A}$ is generated in degree 1. Therefore, $A_1\ne 0$.
\item If $W=\V_0\oplus\V_2$, then $\tilde{A}$ is generated in degree 2. Therefore, $A_2\ne 0$.
\item If $W=4\V_0$ then $A=B$.
\end{enumerate}
We see that $\{ {\rm (i)\vee (ii)\vee (iii)\vee (iv)}\}\wedge \{ (\dag )\}=\emptyset$. 
\end{proof}

\begin{corollary} The presentation $\varphi :k[\V_4]\to \mathcal{O}(X)$ defined in {\rm (\ref{phi-presentation})} is minimal.
\end{corollary}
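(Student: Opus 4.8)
The plan is to deduce the corollary from three facts already assembled in this section: (i) the embedding dimension of $X$ equals $4$, proved as part (b) of Theorem~\ref{non-extend1}; (ii) the equivariant surjection $\varphi:k[\V_4]\to\mathcal{O}(X)$ of (\ref{phi-presentation}) exhibits a presentation whose module $\V_4$ has dimension $5$; and (iii) the mutual exclusivity of conditions (1) and (2) in Lemma~\ref{V4}. Since $X=SL_2(\C)/N$ is an irreducible surface, $\mathcal{O}(X)$ is a $k$-domain with $\dim\mathcal{O}(X)=2\ge 1$, so the hypotheses of Lemma~\ref{V4} are met with $B=\mathcal{O}(X)$.

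First I would take an arbitrary presentation $\psi:k[W]\to\mathcal{O}(X)$, where $W$ is an $SL_2$-module, and show that $\dim_k W\ge 5$; by (ii) this is exactly what it means for $\varphi$ to be minimal. Because $k[W]$ is a polynomial ring in $\dim_k W$ variables, the surjection $\psi$ realizes $X$ as a closed subvariety of $\A_k^{\dim_k W}$, whence $\dim_k W$ is at least the embedding dimension of $X$, which is $4$ by (i). It remains to exclude the case $\dim_k W=4$. If $\dim_k W=4$, then $k[W]\cong k^{[4]}$ carrying the linear $SL_2$-action induced from $W$, and $\psi$ is an equivariant surjection $k^{[4]}\to\mathcal{O}(X)$; that is, condition (2) of Lemma~\ref{V4} holds for $B=\mathcal{O}(X)$. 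But condition (1) of that lemma also holds, by (\ref{phi-presentation}), contradicting (iii). Hence $\dim_k W\ge 5=\dim_k\V_4$, and $\varphi$ is minimal.

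I do not expect a real obstacle here, since all the substantive content is packaged into Lemma~\ref{V4} and part (b) of Theorem~\ref{non-extend1}. The only points that need a word of care are the observation that a presentation with a $4$-dimensional module $W$ is literally an instance of condition (2) of Lemma~\ref{V4} --- one uses that $k[W]$ is a polynomial ring equipped with the induced linear $SL_2$-action --- together with the routine verification that $\mathcal{O}(X)$ satisfies the standing hypotheses of that lemma (being a $k$-domain of dimension $\ge 1$, as the coordinate ring of an irreducible affine surface).
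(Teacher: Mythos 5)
Your proposal is correct and follows essentially the paper's intended argument: any presentation module $W$ must satisfy $\dim_k W\ge 4$ because the embedding dimension of $X$ is $4$ (Theorem~\ref{non-extend1}(b)), and $\dim_k W=4$ is ruled out since that would make conditions (1) and (2) of Lemma~\ref{V4} hold simultaneously for $B=\mathcal{O}(X)$. Hence $\dim_k W\ge 5=\dim_k\V_4$, which is exactly the minimality of $\varphi$.
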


Part (a) is implied by this corollary. This completes the proof of {\it Theorem\,\ref{non-extend1}}. 


\section{The cylinder over $X$}\label{cylinder}

In this section, assume that $k=\bar{k}$. 
Let $(\delta^{\prime},\upsilon^{\prime})$ be the nontrivial fundamental pair on $\mathcal{O}(Y)$.
This restricts to the fundamental pair $(\delta ,\upsilon )$ on $\mathcal{O}(X)$. 
Let $T\subset SL_2(k)$ be the torus, $T\cong k^*$. 
The {\bf triangular subgroup} $\mathcal{T}$ of ${\rm Aut}_k(\mathcal{O}(X))$ is generated by $T$ and automorphisms $\alpha$ preserving $\deg_{\delta}$,  i.e.,
$\alpha (\krn (\delta^n))=\krn (\delta^n)$ for each $n\in\N$. Likewise, 
the {\bf triangular subgroup} $\mathcal{T}^{\prime}$ of ${\rm Aut}_k(\mathcal{O}(Y))$ is generated by $T^{\prime}$ and automorphisms preserving $\deg_{\delta^{\prime}}$.
The description of the groups ${\rm Aut}_k(Y)$ and ${\rm Aut}_k(X)$ given by Danilov and Gizatullin \cite{Danilov.Gizatullin.77}
can be recast in the following form.  

\begin{theorem} With the assumptions above:
\begin{itemize}
\item [{\bf (a)}] ${\rm Aut}_k(\mathcal{O}(Y))\cong O_3(k)\ast_{H^{\prime}}\mathcal{T}^{\prime}$ where $H^{\prime}=O_3(k)\cap\mathcal{T}^{\prime}$
\item [{\bf (b)}] ${\rm Aut}_k(\mathcal{O}(X))\cong PSL_2(k)\ast_H\mathcal{T}$ where $H=PSL_2(k)\cap\mathcal{T}$
\end{itemize}
\end{theorem}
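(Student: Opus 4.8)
The plan is to take the description of ${\rm Aut}_k(Y)$ and ${\rm Aut}_k(X)$ obtained by Danilov and Gizatullin \cite{Danilov.Gizatullin.77} as input and re-express it in amalgam form, matching the abstract pieces with the groups $O_3(k)$, $PSL_2(k)$ and the triangular groups $\mathcal{T}'$, $\mathcal{T}$. Both $Y\cong\PP^1\times\PP^1\setminus\Delta$ and $X\cong\PP^2\setminus C$ are normal affine surfaces whose minimal completion has irreducible rational boundary, and each carries a distinguished $\A^1$-fibration, the one attached to $\delta'$ (resp.\ $\delta$); so the techniques of \cite{Danilov.Gizatullin.77} apply and produce a tree on which ${\rm Aut}_k(\mathcal{O}(S))$ acts with fundamental domain a single edge. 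By Bass--Serre theory this gives ${\rm Aut}_k(\mathcal{O}(S))\cong G_1\ast_{G_0}G_2$, where (up to conjugacy) $G_2$ is the stabilizer of the fibration vertex, $G_1$ the stabilizer of the completion vertex, and $G_0=G_1\cap G_2$ the edge stabilizer.

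It remains to name the three groups. First, $G_2$ is the triangular subgroup of the definition: the fibration vertex records the $\A^1$-fibration of $S$ attached to $\delta'$ (resp.\ $\delta$), whose associated filtration of $\mathcal{O}(S)$ is $\{\krn((\delta')^n)\}_n$ (resp.\ $\{\krn(\delta^n)\}_n$), and an automorphism fixes that vertex precisely when it respects the fibration, that is, preserves each member of the filtration; together with $T'$ (resp.\ $T$) these are exactly the generators of $\mathcal{T}'$ (resp.\ $\mathcal{T}$). Second, $G_1$ is the group of automorphisms of $S$ extending to its minimal completion. For $Y$ this is ${\rm Aut}(\PP^1\times\PP^1,\Delta)$: an automorphism of $\PP^1\times\PP^1$ fixing $\Delta$ acts by one and the same element of $PGL_2(k)$ on both factors, possibly composed with the factor interchange, which centralizes the diagonal $PGL_2(k)$; hence $G_1\cong PGL_2(k)\times\Z_2\cong SO_3(k)\times\{\pm I\}=O_3(k)$. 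For $X$ this is $\{g\in PGL_3(k):gC=C\}$, and restriction to $C\cong\PP^1$ identifies it with $PGL_2(k)\cong PSL_2(k)$. Third, $G_0=G_1\cap G_2$ becomes $H'=O_3(k)\cap\mathcal{T}'$ (resp.\ $H=PSL_2(k)\cap\mathcal{T}$), yielding (a) and (b).

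The main obstacle is the structural theorem itself: that ${\rm Aut}_k(\mathcal{O}(S))$ genuinely is the amalgam $G_1\ast_{G_0}G_2$, equivalently that $G_1$ and $G_2$ generate it with no relations beyond those holding in $G_0$. This is the core content of \cite{Danilov.Gizatullin.77}, which I would cite rather than reprove. The delicate step in merely \emph{applying} it is to confirm that for these two surfaces the tree really has a single edge as fundamental domain --- essentially, that there is one ${\rm Aut}$-orbit of standard $\A^1$-fibrations and one completion with irreducible boundary --- and that the completion-vertex stabilizer is \emph{exactly} $O_3(k)$ (resp.\ $PSL_2(k)$) and not some strictly larger group of automorphisms extending only to a non-minimal completion. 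An alternative route to (b), once (a) is known, is the degree-two quotient $Y\to X$ with deck group $\Z_2=N/T$ acting on $Y$: one would recover ${\rm Aut}(X)$ as the normalizer of this $\Z_2$ in ${\rm Aut}(Y)\cong O_3(k)\ast_{H'}\mathcal{T}'$ modulo $\Z_2$, though this requires controlling which automorphisms of $X$ lift to $Y$.
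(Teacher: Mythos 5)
Your proposal matches the paper's treatment: the paper states this theorem simply as a recasting of Danilov--Gizatullin's description of ${\rm Aut}_k(Y)$ and ${\rm Aut}_k(X)$ and offers no further argument, exactly as you delegate the amalgam structure to \cite{Danilov.Gizatullin.77}. Your identification of the vertex groups (the triangular subgroup as the stabilizer of the $\A^1$-fibration attached to $\delta'$ resp.\ $\delta$, and $O_3(k)\cong PGL_2(k)\times\Z_2$ resp.\ $PSL_2(k)$ as the automorphisms extending to $\PP^1\times\PP^1$ preserving $\Delta$ resp.\ to $\PP^2$ preserving $C$) is correct and in fact spells out more of the recasting than the paper does.
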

The amalgamated free product structure of these groups implies that every $\G_a$-action on $Y$ can be conjugated to $\mathcal{T}^{\prime}$, and 
every $\G_a$-action on $X$ can be conjugated to $\mathcal{T}$; see \cite{Serre.80}. 
Moreover, by definition, every nontrivial triangular $\G_a$-action has the same ring of invariants, since the invariant ring is uniquely determined by the given degree function. 
We conclude that
\begin{enumerate}
\item for every nonzero $D^{\prime}\in {\rm LND}(\mathcal{O}(Y))$, there exists $\alpha^{\prime}\in{\rm Aut}_k(\mathcal{O}(Y))$ such that \\
$\krn D^{\prime}=\alpha^{\prime} (\krn\delta^{\prime})$, and 
\item for every nonzero $D\in {\rm LND}(\mathcal{O}(X))$, there exists $\alpha\in{\rm Aut}_k(\mathcal{O}(X))$ such that \\ $\krn D=\alpha (\krn\delta )$. 
\end{enumerate}
This observation yields the following. 
\begin{corollary}\label{plinth} With the assumptions above:
\begin{itemize}
\item [{\bf (a)}] Every irreducible $D^{\prime}\in {\rm LND}(\mathcal{O}(Y))$ is conjugate to $\delta^{\prime}$ and $\krn (D^{\prime})/{\rm pl}(D^{\prime})\cong k$. 
\item [{\bf (b)}] Every irreducible $D\in {\rm LND}(\mathcal{O}(X))$ is conjugate to $\delta$ and $\krn (D)/{\rm pl}(D)\cong k$. 
\end{itemize}
\end{corollary}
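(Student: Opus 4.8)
The plan is to promote the conclusion of the observation immediately preceding the corollary — that $\krn D'$ (resp.\ $\krn D$) is an automorphic image of $\krn\delta'$ (resp.\ $\krn\delta$) — into genuine conjugacy of the \emph{derivations}, and then to read off $\krn/{\rm pl}$ from the explicit model of $\delta,\delta'$ built in {\it Section\,\ref{SL2-nonextend}}. I would carry out part (b); part (a) is the same argument with $Y,\delta'$ in place of $X,\delta$, differing only in which concrete invariants appear.

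First I would fix an irreducible $D\in{\rm LND}(\mathcal{O}(X))$, use item (2) of the observation to produce $\alpha\in{\rm Aut}_k(\mathcal{O}(X))$ with $\krn D=\alpha(\krn\delta)$, and replace $D$ by $\alpha^{-1}D\alpha$; the result is still irreducible and now satisfies $\krn D=\krn\delta=:A$. Next I would invoke the standard fact (\cite{Freudenburg.17}) that two locally nilpotent derivations of an affine domain sharing a kernel $A$ satisfy $aD=b\delta$ for suitable nonzero $a,b\in A$. Because $X$ is a homogeneous space, the $\G_a$-action attached to the fundamental pair is fixed-point free — a conjugate of $\G_a$ cannot sit inside $N$, whose only unipotent element is the identity — so $(\delta\,\mathcal{O}(X))$ is the unit ideal and $\delta$ is irreducible, as is $D$ by hypothesis. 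Comparing greatest common divisors in the identity $aD=b\delta$ and using $\mathcal{O}(X)^{*}=k^{*}$ forces $a$ and $b$ to be proportional, whence $D=c\delta$ for some $c\in k^{*}$. Finally, the torus $T\subset PSL_2(k)\subset{\rm Aut}_k(\mathcal{O}(X))$ normalizes the line $k\delta$ and acts on it through a nontrivial character of $k^{*}$, which is surjective because $k$ is algebraically closed; choosing $t\in T$ with $t\delta t^{-1}=c\delta$ displays $D$ as a conjugate of $\delta$. This proves the first assertion of (b).

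For the plinth statement, note that an automorphism carrying $\delta$ to $D$ also carries $\krn\delta$ onto $\krn D$ and $\delta\,\mathcal{O}(X)$ onto $D\,\mathcal{O}(X)$, hence induces an isomorphism $\krn(D)/{\rm pl}(D)\cong\krn(\delta)/{\rm pl}(\delta)$, so it suffices to compute the latter. In the notation of {\it Section\,\ref{SL2-nonextend}}, $\mathcal{O}(X)$ is the subring of $\mathcal{O}(Y)=k[\V_2]/(T_2-1)=k[x_0,x_1,x_2]$ generated by the images of the even polynomials in $X_0,X_1,X_2$, and $\delta$ is the restriction of the derivation $\delta'$ with $\delta'(x_1)=x_0$, $\delta'(x_0)=0$. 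Since $\krn\delta'=k[x_0]$, one gets $A=\krn\delta=\mathcal{O}(X)\cap k[x_0]=k[x_0^{2}]\cong k^{[1]}$. Now $x_0x_1\in\mathcal{O}(X)$ is a local slice of $\delta$ with $\delta(x_0x_1)=x_0^{2}$, so $(x_0^{2})\subseteq{\rm pl}(\delta)$; moreover ${\rm pl}(\delta)\neq A$, since a slice of $\delta$ would yield $\mathcal{O}(X)\cong A^{[1]}\cong k^{[2]}$ by the Slice Theorem, i.e.\ $X\cong\A_k^{2}$, which is false. As $(x_0^{2})$ is a maximal ideal of $k[x_0^{2}]$, it follows that ${\rm pl}(\delta)=(x_0^{2})$ and $\krn(\delta)/{\rm pl}(\delta)\cong k$; for part (a) the same computation uses $\krn\delta'=k[x_0]$, the local slice $x_1$ with $\delta'(x_1)=x_0$, and $Y\not\cong\A_k^{2}$, giving ${\rm pl}(\delta')=(x_0)$.

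I expect the only non-routine point to be the upgrade from ``the kernels are conjugate'' to ``the derivations are conjugate''; this rests on two inputs — irreducibility of $\delta$ and $\delta'$, which I would deduce from fixed-point-freeness of the $\G_a$-actions on $X$ and $Y$, and triviality of the unit groups $\mathcal{O}(X)^{*}=\mathcal{O}(Y)^{*}=k^{*}$, which is clear since $Y$ is the complement in $\PP^1\times\PP^1$ of the irreducible divisor $\Delta$, whose class is not torsion, so a nowhere-vanishing regular function on $Y$ is constant (and $\mathcal{O}(X)\subseteq\mathcal{O}(Y)$). Granting these, the gcd comparison and the torus rescaling are formal, and the plinth computation is a short explicit check.
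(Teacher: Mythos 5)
Your proposal is correct and follows essentially the same route as the paper: conjugate $D$ so that its kernel equals $\krn\delta$, write $aD=b\delta$ with $a,b\in\krn\delta\cong k^{[1]}$, eliminate $a,b$ using irreducibility (the paper implements your ``gcd comparison'' via a Bezout identity $af+bg=1$ in the kernel, which is the clean mechanism since $\mathcal{O}(X)$ and $\mathcal{O}(Y)$ are not factorial), absorb the resulting constant by conjugating with the torus acting on $k\delta$ through $t\mapsto t^2$, and read off ${\rm pl}(\delta)=(x_0^2)$ (resp. ${\rm pl}(\delta')=(x_0)$) from the local slice $x_0x_1$ (resp. $x_1$). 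Your extra justifications --- irreducibility of $\delta,\delta'$ via fixed-point freeness of the $\G_a$-actions, and ${\rm pl}\neq\krn$ because $X,Y\not\cong\A_k^2$ --- only make explicit steps the paper leaves implicit.
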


\begin{proof} Let $D^{\prime}\in {\rm LND}(\mathcal{O}(Y))$ be irreducible and let $\alpha\in{\rm Aut}_k(\mathcal{O}(Y))$ be such that:
\[
\krn\delta^{\prime}=\alpha (\krn D^{\prime})=\krn (\alpha D^{\prime}\alpha^{-1})
\]
Set $E=\alpha D^{\prime}\alpha^{-1}$. Then $E$ is irreducible and $A:=\krn E=\krn\delta^{\prime}\cong k^{[1]}$. 
By \cite{Freudenburg.17}, Principle 12, there exist nonzero $a,b\in A$ with $a\delta^{\prime}=bE$. 
We may assume that $\gcd_A(a,b)=1$.
Let $f,g\in A$ be such that $af+bg=1$. We have:
\[
ga\delta^{\prime}=gbE=(1-af)E \implies E=a(g\delta^{\prime}-fE)
\]
Since $E$ is irreducible as a derivation it follows that $a\in k^*$. Irreducibility of $\delta^{\prime}$ now implies $b\in k^*$. 
So $E=c\delta^{\prime}$ for some $c\in k^*$. 

Let $T\to {\rm Aut}_k(\mathcal{O}(Y))$, $t\to \lambda_t$, be the torus action which is the restriction of the $SL_2$-action. 
Since $\deg\delta^{\prime}$ is homogeneous of degree 2, we see that $\lambda_t\delta^{\prime}\lambda_t^{-1}=t^2\delta^{\prime}$. 
Choose $t\in k^*$ so that $t^2=c$. Then $E=c\delta^{\prime}=\lambda_t\delta^{\prime}\lambda_t^{-1}$. 
So $E$ and $\delta^{\prime}$ are conjugate, which implies that $D^{\prime}$ and $\delta^{\prime}$ are conjugate. 
Moreover, in $\mathcal{O}(Y)=k[x_0,x_1,x_2]$ we have $\krn\delta^{\prime}=k[x_0]$ and $\delta^{\prime}x_1=x_0$, which implies that:
\[
{\rm pl}(\delta^{\prime})=(x_0)\quad\text{and}\quad \krn (\delta^{\prime})/{\rm pl}(\delta^{\prime})\cong k
\]
This proves part (a), and the proof of part (b) follows exactly the same line of reasoning. 
\end{proof}

Let $R=\mathcal{O}(X)$. 
The preceding section shows that $R$ is generated by quadratic forms in 
$\mathcal{O}(Y)=k [x_0,x_1,x_2]$. 

 \begin{theorem}\label{non-cancel}
 Let $R[T]\cong_kR^{[1]}$ and define 
 \[
 F=15(x_0^2)T-3(x_0x_1)(x_2^2)-2(x_1x_2) \quad \text{and}\quad 
 \tilde{R}=R[T]/(F)
 \]
 \begin{itemize}
 \item [{\bf (a)}] $R^{[1]}\cong_k\tilde{R}^{[1]}$
 \smallskip
 \item [{\bf (b)}] $R\not\cong_k\tilde{R}$
 \end{itemize}
 \end{theorem}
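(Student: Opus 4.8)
The plan is to verify part (a) by an explicit cylinder computation and then derive part (b) from the structure of $\mathrm{LND}(\tilde{R})$ versus $\mathrm{LND}(R)$, using Corollary \ref{plinth}. For part (a), the key observation is that the polynomial $F = 15(x_0^2)T - 3(x_0x_1)(x_2^2) - 2(x_1x_2)$ is \emph{linear} in $T$ with leading coefficient $15\,x_0^2 \in R$. Passing to the localization $R_{x_0^2}$, the equation $F = 0$ solves uniquely for $T$, so $\tilde{R}_{x_0^2} \cong R_{x_0^2}$; the work is to promote this to the ``stably equivalent'' statement $R^{[1]} \cong_k \tilde{R}^{[1]}$. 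The mechanism I would use is the Danielewski-type trick: exhibit a locally nilpotent derivation (equivalently a $\mathbb{G}_a$-action) on the total space $\mathrm{Spec}(R[T])$ — built from the triangular $\mathbb{G}_a$-action $\delta$ on $R$ lifted suitably to $R[T]$, using that $x_0^2 \in \krn\delta$ — for which $F$ is (up to the unit $x_0^2$) a slice-like coordinate, so that $\mathrm{Spec}(R[T])$ is a line bundle in two ways: one projection realizes it as $\mathrm{Spec}(R)\times\A^1$, and the other as $\mathrm{Spec}(\tilde{R})\times\A^1$ after a coordinate change clearing the $x_0^2$ denominator. Concretely, I would check $\delta(x_0 x_1) $, $\delta(x_1 x_2)$ and show that $F$ together with an appropriate second coordinate on the fiber gives $R[T] = R[\,\overline{T}\,] = \tilde{R}[\,U\,]$ for suitable elements $\overline{T}, U$; this is exactly the pattern by which Danielewski surfaces become cancellation counterexamples.

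For part (b), the strategy is to show $X = \mathrm{Spec}(R)$ and $\tilde{X} := \mathrm{Spec}(\tilde{R})$ are not isomorphic by comparing their locally nilpotent derivations. Since $F$ is linear in $T$ over $R_{x_0^2}$, one sees $\tilde{R}$ is a \emph{nonnormal} affine surface (the non-normality concentrated on the zero locus of $x_0^2$, in parallel with Theorem \ref{two-embed}(c)), whereas $X \cong \PP^2 \setminus C$ is smooth. Smoothness is not an isomorphism invariant one can cite for free here, so instead I would argue via $\mathrm{ML}$: we have $\mathrm{ML}(R) = k$ (shown in Section \ref{SL2-nonextend}), and I would verify $\mathrm{ML}(\tilde{R}) = k$ as well, so that does not separate them; the finer invariant is the structure of $\krn D$ for $D \in \mathrm{LND}$. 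By Corollary \ref{plinth}(b), every irreducible $D \in \mathrm{LND}(R)$ is conjugate to $\delta$ and satisfies $\krn(D)/\mathrm{pl}(D) \cong k$ — equivalently, the plinth ideal is maximal, so every $\mathbb{G}_a$-action on $X$ has a one-point set-theoretic fixed locus of a very rigid type. On $\tilde{X}$, by contrast, the lifted $\mathbb{G}_a$-action coming from part (a) (the one for which $\tilde{R}[U] = R[T]$) has $\krn / \mathrm{pl}$ \emph{not} isomorphic to $k$, because the non-normal locus forces extra elements into the kernel or enlarges the plinth; detecting this discrepancy — i.e., producing an explicit $D \in \mathrm{LND}(\tilde{R})$ whose kernel behaves differently from any kernel allowed by Corollary \ref{plinth}(b) — is the crux.

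The main obstacle I anticipate is precisely this last step: proving $R \not\cong_k \tilde{R}$ rather than merely suspecting it from non-normality. One clean route, if the non-normality of $\tilde{R}$ can be pinned down, is: the normalization map $\tilde{R} \hookrightarrow \widetilde{R}^{\mathrm{norm}}$ is an isomorphism invariant, so it suffices to show $\tilde{R}$ is not normal while $R$ is — and $R = \mathcal{O}(X)$ with $X \cong \PP^2\setminus C$ smooth is certainly normal. So the real task reduces to a direct check: localize $\tilde{R}$ at the prime $(x_0^2)$ (or rather its preimage), find an element of the fraction field integral over $\tilde{R}$ but not in $\tilde{R}$ — the natural candidate being $T$ itself, which satisfies $15(x_0^2)T = 3(x_0x_1)(x_2^2) + 2(x_1 x_2)$, a monic-after-clearing relation showing $T \in \mathrm{Frac}(\tilde{R})$ is integral over $R \subset \tilde{R}$ but $T \notin \tilde{R}$ unless $x_0^2$ is a unit. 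Verifying $T \notin \tilde{R}$ requires knowing the relations in $R$ among $x_0^2, x_0 x_1, x_1 x_2, x_2^2$ well enough to see that $3(x_0x_1)(x_2^2) + 2(x_1x_2)$ is not divisible by $15\,x_0^2$ in $R$ — a concrete computation in the quadratic Veronese-type subring $R \subset k[x_0,x_1,x_2]$, which should follow from the explicit generators of $R$ recorded in Section \ref{SL2-nonextend}. If that divisibility fails, $\tilde{R}$ is non-normal, hence $R \not\cong_k \tilde{R}$, completing part (b).
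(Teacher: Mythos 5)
Your sketch for part (a) is in the right spirit and is essentially how the paper proceeds, but it is not carried to completion: the paper extends the triangular derivation $\delta$ of $R=k[x,y,z,w]$ (where $x=x_0^2$, $y=x_0x_1$, $z=x_1x_2$, $w=x_2^2$) to $\Delta$ on $R[v]$ by $\Delta v=w$, computes that $Q=5xv-yw$ satisfies $\Delta Q=2P$ with $P=x_0x_2$, and hence that $s=\tfrac32 Q-z$ is an honest slice, $\Delta s=1$, with $2s=F$. The Slice Theorem then gives $R[v]=\tilde{R}[s]$ and $\tilde{R}=\krn\Delta\cong R[v]/(s)$ at one stroke. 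Your formulation (``$F$ is a slice-like coordinate up to the unit $x_0^2$'', ``after clearing the $x_0^2$ denominator'') suggests only a locally trivial $\A^1$-fibration over ${\rm Spec}(R_{x_0^2})$, which by itself does not yield the global isomorphism $R^{[1]}\cong\tilde{R}^{[1]}$; the whole point is that a genuine slice exists, and you would need to produce it (or argue some substitute) to close part (a).

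The serious gap is in part (b). Your main route --- show $\tilde{R}$ is non-normal while $R$ is normal --- cannot work. Once (a) holds, $\tilde{R}^{[1]}\cong R^{[1]}$ is a polynomial ring over the normal (indeed smooth) ring $R$, and a domain is normal if and only if a polynomial ring over it is normal; equivalently, $\tilde{R}=\krn\Delta$ is factorially closed in $R^{[1]}$. In fact ${\rm Spec}(\tilde{R})$ is smooth, since regularity also descends from $\tilde{R}^{[1]}$. Your integrality argument is also incorrect: the relation $15(x_0^2)T=3(x_0x_1)(x_2^2)+2(x_1x_2)$ is linear but not monic in $T$, so it does not exhibit $T$ as integral over $R$; and $T$ lies in $\tilde{R}=R[T]/(F)$ by construction, so there is no failure of normality to detect there. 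The invariant that actually separates the two rings is the one you mention only as a fallback and do not execute: the paper produces an explicit irreducible $\theta\in{\rm LND}(\tilde{R})$, namely $\theta=\bar{x}\,\Theta$ where $\Theta$ is defined on $\tilde{R}[\bar{x}^{-1}]=k[\bar{x},\bar{x}^{-1}][\bar{y}]$ by $\Theta\bar{y}=\bar{x}$, checks $\theta$ is irreducible (the ideal generated by its image is all of $\tilde{R}$), and computes $\krn\theta=k[\bar{x}]$ with plinth ideal $\bar{x}^2k[\bar{x}]$, so $\krn\theta/{\rm pl}(\theta)\cong k[x]/(x^2)$; by Corollary \ref{plinth}(b) every irreducible $D\in{\rm LND}(R)$ has $\krn D/{\rm pl}(D)\cong k$, whence $R\not\cong_k\tilde{R}$. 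Without such a concrete computation, and with the non-normality route closed off, your part (b) does not constitute a proof.
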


\begin{proof} Since 
\[
1=2x_0x_2-x_1^2\implies x_0x_2=2(x_0^2)(x_2^2)-(x_0x_1)(x_1x_2)
\]
it follows that $R=k[x,y,z,w]$ where:
\[
x=x_0^2\,\, ,\,\, y=x_0x_1\,\, ,\,\, z=x_1x_2\,\, ,\,\,w=x_2^2
\]
Define $P=x_0x_2=2xw-yz$. From equation (\ref{kernel}) above we find that the ideal $I$ of relations in $R$ is
\[
I=\left( y^2-x(2P-1) \, ,\,  yP-xz \, , \, yz-P(2P-1) \, , \, yw-zP \, , \, z^2-w(2P-1)\right)
\]
The fundamental pair on $R$ is $(\delta ,\upsilon)$ and $\delta$ operates on $R$ by:
\[
\delta w=2z\,\, ,\,\, \delta z=3P-1\,\, ,\,\, \delta P=y\,\, ,\,\, \delta y=x\,\, ,\,\, \delta x=0
\]
Note that $\delta (zP)=5P^2-2P$. 

Extend $\delta$ to $\Delta$ on $R[v]=R^{[1]}$ by $\Delta v=w$. 
Define $Q=5xv-yw$. Then $\Delta Q=2P$, 
so $\Delta s=1$ for $s=\frac{3}{2}Q -z$. Set $\tilde{R}=\krn\Delta$. By the Slice Theorem,
\[
R^{[1]}=R[v]=k[x,y,z,w,v]=k[x,y,s,w,v]=\tilde{R}[s]=\tilde{R}^{[1]}
\]
and
\[
\tilde{R}\cong_kR[v]/(s)=k[\bar{x},\bar{y},\bar{w},\bar{v}]
\]
where $2\bar{z}=3\bar{Q}$ and $2\bar{P}=4\bar{x}\bar{w}-3\bar{y}\bar{Q}$. 
This proves part (a).

Let $L=k[\bar{x},\bar{x}^{-1}]$ and observe that $\tilde{R}[\bar{x}^{-1}]=L[\bar{y},\bar{w},\bar{v}]=L[\bar{y},\bar{P},\bar{Q}]$. 
Since $3\bar{x}\bar{Q}=2\bar{y}\bar{P}$ and $\bar{x}(2\bar{P}-1)=\bar{y}^2$ we see that $\tilde{R}[\bar{x}^{-1}]=L[\bar{y}]=L^{[1]}$. Define $\Theta\in{\rm LND}(L[\bar{y}])$ 
by $\Theta \bar{y}=\bar{x}$. From the observed relations we find:
\[ \textstyle
\Theta \bar{P}=\bar{y}\,\, ,\,\, \Theta \bar{Q}=2\bar{P}-\frac{2}{3} \,\, ,\,\, \Theta \bar{w}= 3\bar{Q} \,\, ,\,\, \Theta \bar{v}=\bar{w}-\frac{2}{15}\bar{x}^{-1}
\]
Define $\theta=\bar{x}\,\Theta\in{\rm LND}(\tilde{R})$. Let $J=(\theta\tilde{R})$, the ideal generated by the image of $\theta$. Then
\[
J=(\theta \bar{x},\theta \bar{y},\theta \bar{w},\theta \bar{v})=(\bar{x}^2,\bar{x}\bar{Q},\bar{x}\bar{w}-\textstyle\frac{2}{15})
\]
Modulo $J$, we have:
\[
0\equiv \bar{x}\bar{Q}\equiv 5\bar{x}^2\bar{v}-\bar{x}\bar{y}\bar{w}\equiv\textstyle\frac{2}{15}\bar{y} \implies
0\equiv 2\bar{P}-\textstyle\frac{2}{3}\equiv 4\bar{x}\bar{w}-2\bar{y}\bar{z}-\frac{2}{3}\equiv -\frac{2}{15} \implies J=(1)
\]
Therefore, $\theta$ is irreducible and the $\G_a$-action on ${\rm Spec}(\tilde{R})$ induced by $\theta$ is fixed-point free. 
We thus obtain:
\begin{enumerate}
\item $\theta\in{\rm LND}(\tilde{R})$ is irreducible.
\item $\krn\theta =k[\bar{x}]$
\item $\theta \bar{y}=\bar{x}^2$ and ${\rm pl}(\theta )=\bar{x}^2k[\bar{x}]$
\end{enumerate}
By {\it Corollary\,\ref{plinth}}, there is no irreducible $D\in{\rm LND}(R)$ such that $\krn D/{\rm pl}(D)\cong k[x]/(x^2)$. Therefore,
$\tilde{R}\not\cong_kR$.
\end{proof}


\section{Closed embeddings of $Y$ in $\A_k^3$}\label{embedding}

The purpose of this section is to prove {\it Theorem\,\ref{two-embed}}. 
Let $B=k[x,y,z]=k^{[3]}$. Given $f,g\in B$
define the $k$-derivation $\Delta_{(f,g)}$ of $B$ by
\[ 
\Delta_{(f,g)}(h)=\frac{\partial (f,g,h)}{\partial(x,y,z)} \,\, ,\,\, h\in B 
\]
that is, the determinant of the Jacobian matrix of $f,g$ and $h$. We have:
\[
F=xz-y^2 \,\, ,\,\, r=yF-x\,\, ,\,\, G=x^{-1}(F^4+r^3)
\]
It is easy to check that $G+r^2\in FB$. Set $H=F^{-1}(G+r^2)$. 
It is also easy to check that $G$ and $H$ are irreducible. 
Define $k$-derivations $\Delta_i$ of $B$, $1\le i\le 3$ by:
\[
\Delta_1=\Delta_{(x,F)}\,\, ,\,\, \Delta_2=\Delta_{(F,G)}\,\, ,\,\, \Delta_3=\Delta_{(G,H)}
\]
The following properties hold.
\begin{enumerate}
\item $\Delta_1$ is locally nilpotent, $\krn \Delta_1=k[x,F]$ and $\Delta_1r=xF$.
\item $\Delta_2$ is locally nilpotent, $\krn \Delta_2=k[F,G]$ and $\Delta_2r=FG$.
\item $\Delta_3$ is locally nilpotent, $\krn \Delta_3=k[G,H]$ and $\Delta_3r=GH$.
\end{enumerate} 
Statement (1) is well known. Since $xG=F^4+r^3$, 
\cite{Freudenburg.17}, Theorem 5.28, implies that $\Delta_2$ is locally nilpotent, $\krn \Delta_2=k[F,G]$ and $\Delta_2r=FG$. So statement (2) holds.
Likewise, since $FH=G+r^2$,
the just cited theorem implies that $\Delta_3$ is locally nilpotent, $\krn \Delta_3=k[G,H]$ and $\Delta_3r=GH$. So statement (3) holds.

Given $a\in k^*$ and $b\in B$, let $\bar{B}=B/(G-a)$ and let $\bar{b}$ denote the image of $b$ in $\bar{B}$. 
It is easy to check that ${\rm Spec}(\bar{B})$ is smooth, so $\bar{B}$ is a normal ring. 
In addition, $\Delta_2,\Delta_3$ induce $\delta_2,\delta_3\in {\rm LND}(\bar{B})$, where $\delta_2\bar{r}=a\bar{F}$ and $\delta_3\bar{r}=a\bar{H}$, and
$\bar{F}\bar{H}=\bar{r}^2+a$. Suppose that $\dim ML(\bar{B})=1$. 
Then $\krn\delta_2=\krn\delta_3$ implies $\delta_2\bar{H}=0$. But then 
\[
0=\delta_2(a)=\delta_2(\bar{F}\bar{H}-\bar{r}^2)=-2\bar{r}\delta_2(\bar{r})=-2a\bar{r}\bar{F}
\]
gives a contradiction. Therefore, $ML(\bar{B})=k$. 
By \cite{Freudenburg.17}, Lemma 2.34, $\krn\theta\cong k^{[1]}$ for every nonzero $\theta\in {\rm LND}(\bar{B})$. 
By Daigle's characterization of Danielewski surfaces (\cite{Daigle.04}, Theorem 2.5), $\bar{B}$ is a Danielewski surface and $\bar{B}=k[\bar{F},\bar{H},\bar{r}]$. Since  
$\bar{F}\bar{H}=\bar{r}^2+a$ is a prime relation, 
it follows that
$\bar{B}$ is isomorphic as a $k$-algebra to $B/(xz-y^2-a)$.

Regarding zero fibers, the surface defined by $G=0$ in $\A_k^3$ contains a line of singular points, and is not a normal surface. 
In addition, it is well-known that every fiber of the polynomial $xz-y^2$ is a normal surface. 

This completes the proof of {\it Theorem\,\ref{two-embed}}.


\section{Questions and remarks}

Let $X=SL_2(\C)/N$, as above. 

\begin{question} Let $H\subset SL_2(\C )$ be isomorphic to either $\G_a$ or a finite nonabelian group. Is the induced $H$-action on the surface $X$ the restriction of an $H$-action on $\C^4$, linear or otherwise?
\end{question}

Note that the triangular variable $2Y_2-4Y_0Y_4+Y_1Y_3+4$ used to define the embedding of $X$ in $\C^4$ is an invariant of the linear $\C^*$-action on $\C^5$ coming from the minimal presentation of $X$. 
This means that the $\C^*$-action on $X$ induced by the $SL_2$-action is the restriction of a $\C^*$-action on $\C^4$.  

Likewise, the $\C^*$-action used in the example for $\C^*\times\C^1$ is the restriction of a linear $\C^*$-action on $\C^3$. 
\begin{question}
Is the $\Z_2$-action on $\C^*\times\C^1$ defined by $\mu =(t^{-1},t^ns)$ for odd $n\ge 1$ absolutely nonextendable.? 
\end{question}

\begin{question} 
Can the $SL_2$-action on $X$ be extended holomorphically to $\C^4$? 
\end{question}

\begin{question} Are minimal presentations unique, up to a $G$-module isomorphism?
\end{question}


\vspace{.2in}

\noindent \address{Department of Mathematics\\
Western Michigan University\\
1903 W. Michigan Ave.\\
Kalamazoo, Michigan 49008} \,\,USA\\
\email{gene.freudenburg@wmich.edu}

\end{document}